\documentclass[a4paper,12pt]{amsart}

\usepackage[utf8]{inputenc}
\usepackage[T1]{fontenc}
\usepackage[english]{babel}
\usepackage{amsmath,amsthm,amsfonts}
\usepackage{graphicx}
\usepackage{xcolor}
\usepackage{csquotes}
\usepackage{subcaption}
\usepackage{hyphenat}
\usepackage{enumerate}
\usepackage{microtype}
\usepackage{csquotes}

\usepackage[maxbibnames=5, style = numeric, backend = biber, sorting = none]{biblatex}
\addbibresource{references.bib}

\setlength{\parindent}{0pt}
\setlength{\parskip}{5pt}

\frenchspacing

\newtheorem{theorem}{Theorem}[section]

\newtheorem{lemma}{Lemma}[section]

\DeclareMathOperator{\ext}{ext}
\DeclareMathOperator{\sgn}{sgn}

\newcommand{\N}{\mathbb{N}}

\newcommand{\calA}{\mathcal{A}}
\newcommand{\tildeF}{\widetilde{F}}

\usepackage{titlesec}
\titleformat{\section}{\bf\Large}{\thesection. }{0em}{}[]


\begin{document}

\title{Plasticity of the unit ball of $c$ and $c_0$}
\author{Nikita Leo}
\address{Institute of Mathematics and Statistics, University of Tartu, Narva mnt 18, 51009 Tartu, Estonia}
\email{nikita.leo@ut.ee}
\date{}
\subjclass[2020]{46B20, 47H09}
\keywords{non-expansive map; unit ball; plastic metric space}

\maketitle

\begin{abstract}
    \noindent We prove the plasticity of the unit ball of $c$. That is, we show that every non-expansive bijection from the unit ball of $c$ onto itself is an isometry. We also demonstrate a slightly weaker property for the unit ball of $c_0$ -- we prove that a non-expansive bijection is an isometry, provided that it has a continuous inverse.
\end{abstract}

\section{Introduction}

A map $F\colon X\to Y$ between two metric spaces is called \emph{non-expansive} if $d\big(F(a),F(b)\big)\leq d(a,b)$ for every $a$ and $b$ in $X$. If $d\big(F(a),F(b)\big) = d(a,b)$ for every $a$ and $b$ in $X$, then a function $F$ is called an \emph{isometry}. We call a metric space \emph{plastic} if every non-expansive bijection from the space onto itself is an isometry. The concept was introduced in \cite{NPW2006} by S.~A.~Naimpally, Z.~Piotrowski and E.~J.~Wingler. It seems that the class of plastic metric spaces does not have a simple characterization. The only general result is that every totally bounded space is plastic. Conversely, it is known that a plastic metric space need not be totally bounded nor bounded. It can also be shown that a bounded space need not be plastic. These results were obtained in \cite{NPW2006}.

It is an open question whether the unit ball of every Banach space is a plastic metric space. The question was posed in 2016 by B.~Cascales, V.~Kadets, J.~Orihuela and E.~J.~Wingler \cite{CKOW2016}. In this paper, they also demonstrated the plasticity of the unit ball of strictly convex Banach spaces. The unit ball of a finite-dimensional space is compact and compactness implies plasticity, so the question is really just about the infinite-dimensional spaces. The situation is different there, because an infinite-dimensional space can contain subsets that are very similar to the unit ball, but are not plastic. The simplest example is an ellipsoid with suitably chosen lengths of axes \cite[Example 2.7]{CKOW2016}. In 2016, the plasticity of the unit ball was also proved for the space $\ell_1$ and the proof was presented by V.~Kadets and O.~Zavarzina \cite{KZ2016}. In 2018, they generalised this result to $\ell_1$-sums of strictly convex spaces \cite{KZ2018}. Finally, the same year C.~Angosto, V.~Kadets and O.~Zavarzina demonstrated the plasticity of the unit ball for spaces whose unit sphere is the union of all its finite-dimensional polyhedral extreme subsets \cite{AKZ2018}. These are all the positive results that were obtained so far. The class of strictly convex spaces is a subclass of the other two classes, so the last two results might be seen as generalizations of the first.

The plasticity of the unit ball is known for all $\ell_p$ sequence spaces except $\ell_\infty$ -- there is a positive result for $\ell_1$, while for $p$ in $(1,\infty)$ the space $\ell_p$ is strictly convex and there is a positive result for strictly convex spaces. The difficulty of $\ell_\infty$ seems to be lying in the fact that this space is too big. In particular, it is not separable, while the remaining $\ell_p$ spaces are. However, $\ell_\infty$ contains some common separable spaces like $c$ and $c_0$. While the plasticity of the unit ball of $\ell_\infty$ seems to be a hard problem to tackle, spaces $c$ and $c_0$ may be worth a try. In this paper, we are going to prove the plasticity of the unit ball of $c$, extending the list of positive results. We are also going to establish a slightly weaker property for the unit ball of $c_0$ -- we show that a non-expansive bijection is an isometry, provided that it has a continuous inverse.

Extreme points play a significant role in the problem of plasticity of the unit ball. All the positive results obtained so far are for spaces that have many extreme points. The first positive result was for strictly convex spaces and these are spaces where all the points of the unit sphere are extreme. The later extensions allow for some non-extreme points, but they still require the space to have many extreme points. On the other hand, nothing is known about spaces with little or no extreme points at all. It is natural to suppose that some of these spaces may actually not have a plastic unit ball. This provides motivation to study the plasticity of the unit ball in spaces with little or no extreme points. A positive result for any such space would be also significant -- it would show that the plasticity of the unit ball does not require the space to have many extreme points. One of such spaces is $c_0$, as it has no extreme points. It is also one of the two spaces considered in this paper. In contrast, the space $c$ can be said to have many extreme points.
\newpage
\section{Preliminaries and notation}

Let us list the preliminaries. The next theorem describes the behaviour of a non-expansive bijection from the unit ball of a normed space to itself. It lists some of the main tools used in the study of the problem. These properties were first observed in \cite{CKOW2016}, the first article on plasticity of the unit ball, where the positive result for strictly convex spaces was obtained. As mentioned before, extreme points are essential in the context at hand. The reason for this is the last item of the following theorem.

\begin{theorem}[{\cite[Theorem 2.3]{CKOW2016}}]\label{BnEproperties}
    Let $X$ be a normed space and let $F\colon B_X\to B_X$ be a non-expansive bijection. Then
        \begin{enumerate}[1)]
            \item $F(0)=0$;
            \item if $x\in S_X$, then $F^{-1}(x)\in S_X$;
            \item if $x\in\ext B_X$, then $F^{-1}(x)\in \ext B_X$ and $F^{-1}(\alpha x)=\alpha F^{-1}(x)$ for each $\alpha\in[-1,1]$.
        \end{enumerate}
\end{theorem}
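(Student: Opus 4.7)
The plan is to treat the three items in succession, with the extremality of $x$ entering only in (3). For (1), I would argue by contradiction: if $F(0) \neq 0$, then non-expansiveness gives $\|F(a) - F(0)\| \leq \|a\| \leq 1$ for every $a \in B_X$, so $F(B_X) \subseteq \overline{B}(F(0), 1)$, and surjectivity of $F$ forces $B_X \subseteq \overline{B}(F(0), 1)$. The point $-F(0)/\|F(0)\| \in B_X$ is then at distance $1 + \|F(0)\| > 1$ from $F(0)$, a contradiction. For (2), when $x \in S_X$, non-expansiveness together with (1) gives $\|F^{-1}(x)\| \geq \|F(F^{-1}(x)) - F(0)\| = \|x\| = 1$, and $\|F^{-1}(x)\| \leq 1$ is automatic.

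For (3), set $y := F^{-1}(x)$; by (2) we have $\|y\| = 1$. The main tool is a single observation used repeatedly: if $z \in B_X$ satisfies $\|z\| \leq \alpha$ and $\|z - x\| \leq 1 - \alpha$ for some $\alpha \in (0, 1)$, then $z = \alpha x$. Indeed, $1 = \|x\| \leq \|z\| + \|x - z\| \leq 1$ forces $\|z\| = \alpha$ and $\|x - z\| = 1 - \alpha$; then $z/\alpha$ and $(x - z)/(1 - \alpha)$ lie in $S_X$ and $x = \alpha(z/\alpha) + (1-\alpha)((x - z)/(1-\alpha))$, so extremality of $x$ forces both to equal $x$, giving $z = \alpha x$. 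Applied to $z := F(\alpha y)$ with the non-expansive bounds $\|F(\alpha y)\| \leq \alpha$ and $\|F(\alpha y) - x\| \leq 1 - \alpha$, this yields $F(\alpha y) = \alpha x$ for $\alpha \in (0, 1)$; the endpoints are trivial.

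Applying the same reasoning to $-x \in \ext B_X$ yields $F(\alpha \tilde y) = -\alpha x$ for $\alpha \in [0, 1]$, where $\tilde y := F^{-1}(-x)$. To stitch these into $F^{-1}(\alpha x) = \alpha y$ on all of $[-1, 1]$, one must show $\tilde y = -y$. Since $2 = \|F(y) - F(\tilde y)\| \leq \|y - \tilde y\| \leq \|y\| + \|\tilde y\| \leq 2$, the midpoint $m := (y + \tilde y)/2$ lies in $B_X$ with $\|F(m) - x\| \leq \|m - y\| = 1$ and $\|F(m) + x\| \leq \|m - \tilde y\| = 1$; the identity $(F(m) + x) - (F(m) - x) = 2x$ forces these two norms to equal $1$, and then $x = \tfrac12 (x + F(m)) + \tfrac12 (x - F(m))$ is a convex decomposition into two points of $S_X$, so extremality of $x$ gives $F(m) = 0$, whence $m = 0$ by (1) and injectivity.

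The remaining task is $y \in \ext B_X$, which I expect to be the main obstacle; the key is to reuse the observation above at a cleverly chosen base point. Suppose for contradiction that $y = (u + v)/2$ with $u, v \in B_X$ and $u \neq v$; the triangle equality $1 = \|y\| \leq (\|u\| + \|v\|)/2 \leq 1$ forces $\|u\| = \|v\| = 1$. Apply the observation with $\alpha = 1/2$ to $z := F(u/2)$: the bounds $\|F(u/2)\| \leq \|u/2\| = 1/2$ and $\|F(u/2) - x\| \leq \|u/2 - y\| = \|v\|/2 = 1/2$ give $F(u/2) = x/2$. But $F(y/2) = x/2$ as well by the linearity already established, so injectivity of $F$ yields $u/2 = y/2$, hence $u = y$ and $v = 2y - u = y$, contradicting $u \neq v$.
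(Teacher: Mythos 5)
Your proof is correct. Note that the paper itself does not prove this statement --- it is quoted verbatim from the cited reference \cite[Theorem~2.3]{CKOW2016} --- so there is no internal proof to compare against; measured against the original source, your argument for items (1) and (2) is the standard one, and your ``key observation'' (that $\|z\|\leq\alpha$ and $\|z-x\|\leq 1-\alpha$ force $z=\alpha x$ when $x$ is extreme) is essentially the auxiliary lemma used there, applied uniformly to get the homogeneity $F(\alpha y)=\alpha x$, the antipodal identification $\widetilde{y}=-y$ via the midpoint, and finally the extremality of $F^{-1}(x)$ through the clever injectivity clash $F(u/2)=x/2=F(y/2)$. All the small points that need checking do check out: the hypotheses of the key observation imply $\|z\|=\alpha>0$ so the normalizations are legitimate, and in the midpoint step both $x+F(m)$ and $x-F(m)$ land in $B_X$ before extremality is invoked.
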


When dealing with plasticity of the unit ball, it is also useful to know the following result by P.~Mankiewicz.

\begin{theorem}[{\cite[Theorem 5]{Mankiewicz1972}}]\label{Mankiewicz}
    Let $X$ and $Y$ be normed spaces and let $U$ be a subset of $X$ and $V$ be a subset of $Y$. If $U$ and $V$ are convex with non-empty interior and there exists an isometric bijection $F\colon U\to V$, then $F$ extends to an affine isometric bijection $\widetilde{F}\colon X\to Y$.
\end{theorem}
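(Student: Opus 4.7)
The plan is to first show that $F$ is affine on $U$ and then extend it to an affine isometry on all of $X$. I would begin by normalizing: fix some $u_0 \in \Int U$ and replace $F$ by the map $x \mapsto F(x+u_0) - F(u_0)$, reducing to the case $0 \in \Int U$, $F(0) = 0$, and hence (by the homeomorphism property of any isometric bijection) $0 \in \Int V$. Since translations are affine isometries of $X$ and $Y$, this reduction is harmless.

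The central step would be to prove that $F$ preserves midpoints on $U$. The textbook Mazur--Ulam argument does this when the domain is symmetric about the midpoint $m = (a+b)/2$: the composition $T = \sigma' \circ F \circ \sigma \circ F^{-1}$, where $\sigma(x) = a+b-x$ and $\sigma'(y) = F(a)+F(b)-y$, is an isometry fixing $F(a)$ and $F(b)$, and an iterative construction of approximate midpoints of displacement-maximizing pairs forces $T$ to be the identity, which gives $F(m) = (F(a)+F(b))/2$. To adapt this to our setting I would run the argument locally: for $a,b$ sufficiently close to a common interior point $p$, the reflection $\sigma$ still carries a small ball about $m$ into $U$, and the Mazur--Ulam iterative construction, being confined to displacements bounded by $\|a-b\|$, does not leave this ball. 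This yields midpoint preservation for nearby pairs, and continuity of $F$ (automatic from the isometry hypothesis) upgrades it to affineness on a neighborhood of every interior point.

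To go from local to global, I would use the convexity of $U$: any two points of $U$ lie on a segment which is covered by finitely many neighborhoods of interior points on which $F$ is affine, and on overlaps the two local affine expressions must coincide. This yields $F$ affine on $U$. Since $\Int U$ is a neighborhood of $0$ and $F(0)=0$, the resulting affine map is actually linear on the subspace spanned by $\Int U$, which is all of $X$, so $F$ extends uniquely to a linear map $\widetilde F\colon X \to Y$. Positive homogeneity of the norm then promotes the isometry property from a small ball about $0$ to the whole of $X$, and running the same construction on $F^{-1}$ produces a linear inverse, so $\widetilde F$ is a bijective isometry; undoing the normalizing translations gives the affine isometric extension claimed.

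The main obstacle is the localized Mazur--Ulam step. The classical reflection argument depends crucially on $\sigma$ being an isometry \emph{of} the domain, which fails here since $U$ need not be symmetric. Choosing radii small enough that the iterative displacement-halving construction stays inside $U$ at every stage, and verifying that the resulting local identity $F \circ \sigma = \sigma' \circ F$ is strong enough to give midpoint preservation for \emph{all} pairs in $U$ (not just nearby ones, which is where the convexity patching enters), is the technical heart of the proof.
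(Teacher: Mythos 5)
This statement is not proved in the paper at all: it is quoted verbatim as a known result of Mankiewicz (\cite[Theorem 5]{Mankiewicz1972}) and used as a black box, so there is no internal proof to compare yours against. Judged on its own terms, your proposal is an outline of the natural strategy (localize Mazur--Ulam to get midpoint preservation near interior points, patch by convexity, extend the resulting affine map), but the step you yourself flag as ``the technical heart'' is precisely the part that is missing, and it is not a routine verification -- it is the reason Mankiewicz's theorem is a genuine strengthening of Mazur--Ulam rather than a corollary of it.

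Two concrete obstructions. First, the reflection $\sigma'(y)=F(a)+F(b)-y$ on the image side must act within $V$ near $F(m)$, which requires $B\bigl(F(m),r\bigr)\subset V$ for some $r>0$, i.e.\ that $F$ carries interior points of $U$ to interior points of $V$. In infinite dimensions there is no invariance of domain, and while one does get $F\bigl(B(m,r)\bigr)=B\bigl(F(m),r\bigr)\cap V$ for free from bijectivity, showing that this set is actually a ball (equivalently, that $F(m)\notin\partial V$) is part of what has to be proved, not an input. Second, the Mazur--Ulam argument is not a single application of one reflection: it needs either the full group of isometries fixing $a$ and $b$ (to run the $\lambda\geq 2\lambda$ doubling argument) or an iterated conjugation $g\mapsto \sigma\circ g^{-1}\circ\sigma\circ g$, and each such conjugate requires applying $F^{-1}$ to points that are only known to lie in $B\bigl(F(m),s+\|a-b\|\bigr)$, not in the image of the ball on which the previous iterate was defined. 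Your remark that ``the iterative construction, being confined to displacements bounded by $\|a-b\|$, does not leave this ball'' controls where the distinguished point $m$ goes, but not the domains of definition of the successive compositions; without a careful (and nontrivial) bookkeeping of these domains, the doubling inequality cannot be iterated. The surrounding material -- the normalization, the patching of local affine pieces over the connected set $\Int U$, the linear extension and the surjectivity of $\widetilde{F}$ via $\widetilde{F}(X)\supseteq V-V$ -- is fine, but as written the proposal is a plan with its central lemma unproved.
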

The theorem implies that if $X$ is a normed space and $F\colon B_X\to B_X$ is an isometric bijection, then $F$ extends to an isometric automorphism of $X$ -- a linear isometric bijection from $X$ onto itself. This can be applied when proving the plasticity of the unit ball, because at some point of the proof one might discover that the unit ball of some finite-dimensional subspace is mapped bijectively onto itself or a copy of itself. In this situation, one can apply the fact that the unit ball of a finite-dimensional space is plastic and then apply the mentioned result to conclude the linearity, which can turn useful in the future.

The last theorem is also valuable in one other way. If we want to prove that every non-expansive bijection on the unit ball of some space is an isometry, then it might be useful to know what kinds of isometries are there. The last result says that these are precisely the restrictions of the isometric automorphisms of the space. One possible approach to proving the plasticity of the unit ball in some specific space $X$ consists of considering a non-expansive bijection $F\colon B_X\to B_X$, retrieving some information about this function to choose an isometric automorphism and then proving that the two functions actually coincide. This is also the approach used in the two proofs presented in this paper. This approach requires that we know the characterization of the isometric automorphisms of $X$.

Let us fix the notation for the following two sections. Given a sequence $x\in c$, denote the $n$-th element by $x_n$. Given a sequence $\xi\in c^\N$, denote the $k$-th sequence by $\xi^k$ and the $n$-th element of the $k$-th sequence by $\xi^k_n$. For $n\in\N$ denote by $e^n$ a sequence such that $e^n_n=1$ and $e^n_i=0$ for every $i\in\N\setminus\{n\}$. Denote the unit ball of $c$ by $B$. For $h\in[-1,1]$ denote by $B_h$ the subset of $B$ which consists of all sequences converging to $h$. In particular, $B_0$ is going to stand for the unit ball of $c_0$. For a subset $S\subset \N$ and $h\in[-1,1]$ denote by $B_h^S$ the subset of $B_h$ defined by
\[B_h^S=\{x\in B_h: \text{$x_n=h$ for all $n\in \N\setminus S$}\}.\]
For $h\in[-1,1]$ denote by $B_h^*$ the subset of $B_h$ defined by
\[B_h^*=\{x\in B_h\colon \text{$x_n\ne h$ for finitely many $n\in\N$}\}.\]
For $x\in c$ and $r>0$ denote by $B(x,r)$ the corresponding closed ball of the space $c$.

As pointed above, it is good to know the characterization of the isometric automorphisms. The isometric automorphisms of $c_0$ have the form $\calA(x)_{\sigma_n}=\alpha_n x_n$, where $\sigma\colon \N\to\N$ is a bijection and $\alpha$ is a sequence of ones and minus ones. The isometric automorphisms of $c$ are the same, except that the sequence $\alpha$ should be constant from some point. When dealing with plasticity of the unit ball, it is also important to know the extreme points. While the space $c_0$ has no extreme points, the extreme points of $c$ are the sequences that consist of just ones and minus ones and are constant from some point.

\newpage
\section{The space $c_0$}

Let us consider an arbitrary non-expansive bijection $F\colon B_0\to B_0$ from the unit ball of $c_0$ onto itself. We are going to try to infer as much as possible about the behaviour of this function. The maximum goal is to show that $F$ is an isometry, but we will not be able to achieve this. However, we will show that $F$ is an isometry, provided that $F^{-1}$ is continuous. The first step is to extract some information about $F$ to choose an isometric automorphism of $c_0$ that the function $F$ seems to resemble. We need the following lemma, which is concerned with covering the unit ball by two closed balls of radius one.

\begin{lemma}\label{c0_step-1}
    Let $x$ and $y$ be two non-zero elements of $B_0$. The balls $B(x,1)$ and $B(y,1)$ cover the ball $B_0$ if and only if there exists an index $n$ such that $x_i=y_i=0$ for all $i\ne n$ and either $x_n$ is positive and $y_n$ is negative or $x_n$ is negative and $y_n$ is positive.
\end{lemma}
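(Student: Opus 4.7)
The \emph{if} direction I will verify directly: if $x = t\,e^n$ and $y = -s\,e^n$ with $t, s > 0$, then for any $z \in B_0$ and $i \ne n$ we have $|z_i - x_i| = |z_i| \le 1$, while $|z_n - x_n| \le \max(z_n, t) \le 1$ when $z_n \ge 0$, placing $z \in B(x,1)$; the case $z_n \le 0$ is symmetric and gives $z \in B(y,1)$. The \emph{only-if} direction I will drive by the following coordinate criterion: for $u, v \in B_0$ one has $\|u - v\|_\infty > 1$ if and only if there is some $i$ with $u_i v_i < 0$ and $|u_i| + |v_i| > 1$, since all remaining configurations yield $|u_i - v_i| \le \max(|u_i|, |v_i|) \le 1$. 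Hence certifying $z \notin B(x, 1)$ reduces to exhibiting a single coordinate at which $z$ and $x$ have opposite signs and total magnitude exceeding $1$.

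\textbf{Converse.} Assuming $x, y$ non-zero and $B(x, 1) \cup B(y, 1) \supseteq B_0$, I will proceed in three steps. \emph{Step 1 (sign rigidity):} for each $j$ with $y_j \ne 0$ the point $-\sgn(y_j)\,e^j$ lies in $B_0$ but, by the criterion, outside $B(y, 1)$, hence in $B(x, 1)$; reading off coordinate $j$ then forces $x_j = 0$ or $\sgn(x_j) = -\sgn(y_j)$, and the symmetric statement with $x$ and $y$ swapped also holds. \emph{Step 2 (singleton supports):} assuming $x$ is non-zero at distinct indices $a, b$, I pick any $c$ with $y_c \ne 0$; if $c \notin \{a, b\}$, the element $-\sgn(x_a)\,e^a - \sgn(x_b)\,e^b - \sgn(y_c)\,e^c$ lies in $B_0$ and is refuted from $B(x, 1)$ by coordinate $a$ and from $B(y, 1)$ by coordinate $c$, while if $c \in \{a, b\}$, say $c = a$, step 1 yields $\sgn(y_a) = -\sgn(x_a)$, so I take $\sgn(x_a)\,e^a - \sgn(x_b)\,e^b$ instead, which is refuted from $B(x, 1)$ by coordinate $b$ and from $B(y, 1)$ by coordinate $a$ (now opposing $y_a$ in sign); both sub-cases contradict covering, so $x$ and by symmetry $y$ each have singleton support. \emph{Step 3 (common index, opposite signs):} writing $x = x_n\,e^n$ and $y = y_m\,e^m$, if $n \ne m$ the element $-\sgn(x_n)\,e^n - \sgn(y_m)\,e^m$ sits in $B_0$ but escapes both balls, forcing $n = m$; step 1 at the common index $n$ then gives $\sgn(x_n) = -\sgn(y_n)$.

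\textbf{Main obstacle.} The only subtle case is the overlap $c \in \{a, b\}$ in step 2: the obvious candidate $-\sgn(x_a)\,e^a - \sgn(x_b)\,e^b$ may lie in $B(y, 1)$ precisely because step 1 aligns its $a$-coordinate in sign with $y_a$. Flipping the sign at $a$ trades the $a$-refutation of $B(x, 1)$ for a $b$-refutation while simultaneously creating an $a$-refutation of $B(y, 1)$, since the new $z_a$ now opposes $y_a$ in sign. Aside from this twist, every step reduces to a one- or two-coordinate construction combined with the coordinate criterion.
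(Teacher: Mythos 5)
Your proof is correct and uses essentially the same mechanism as the paper: refuting coverage with finitely supported $\pm 1$ test vectors that oppose $x$ (resp.\ $y$) in sign at a suitable coordinate. The paper organizes the converse slightly differently -- it fixes one index $n$ with $x_n\ne 0$, kills all other coordinates of $y$ with the two-coordinate vector $-\sgn(x_n)e^n-\sgn(y_i)e^i$, and then swaps roles, which avoids the overlap subcase you handle with the sign flip -- but this is only a difference in bookkeeping, not in the idea.
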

\begin{proof}
    Suppose that there exists an index $n$ such that $x_i=y_i=0$ for all $i\ne n$ and either $x_n$ is positive and $y_n$ is negative or $x_n$ is negative and $y_n$ is positive. Consider the case where $x_n$ is positive and $y_n$ is negative. Let $z$ be an arbitrary element of $B_0$. We see that if $z_n\geq 0$, then $z\in B(x,1)$, and if $z_n\leq 0$, then $z\in B(y,1)$. That is, $z$ is always contained in at least one of the balls $B(x,1)$ and $B(y,1)$. This means that the balls $B(x,1)$ and $B(y,1)$ cover the ball $B_0$. The second case is analogous. This proves one of the two directions.
    
    For the second direction, assume that $B_0$ is covered by $B(x,1)$ and $B(y,1)$. Since $x$ is non-zero, there exists an index $n$ such that $x_n\ne 0$. Suppose that there exists an index $i\ne n$ such that $y_i\ne 0$. Consider a sequence $z=-\sgn(x_n)e^n-\sgn(y_i)e^i$. Note that $z\in B_0$, but $z$ is not covered by $B(x,1)$ and $B(y,1)$. This contradicts our assumption, so we can conclude that $y_i=0$ for every $i\ne n$. If $y_n$ was also equal to zero, then $y$ would be equal to zero, therefore $y_n\ne 0$. Now we can repeat the above argument swapping the roles of $x$ and $y$. As a result, we get that $x_i=0$ for every $i\ne n$. Finally, we have to exclude the possibility that $x_n$ and $y_n$ are both positive or both negative. This is easy to see, because if $x_n$ and $y_n$ are both positive, then $B(x,1)$ and $B(y,1)$ do not cover $-e^n\in B_0$, and if $x_n$ and $y_n$ are both negative, then $B(x,1)$ and $B(y,1)$ do not cover $e^n\in B_0$. This proves the second direction.
\end{proof}

Now we can extract some information about $F$ to choose a corresponding isometric automorphism of $c_0$.

\begin{lemma}\label{c0_step0}
    There exists a bijection $\sigma\colon\mathbb{N}\to\mathbb{N}$ and a sequence $\alpha\colon\mathbb{N}\to\{-1,1\}$ such that for every $x\in B_0$ and $n\in\mathbb{N}$ we have the following:
    \begin{enumerate}[1)]
        \item if $x_n=0$, then $F(x)_{\sigma_n}=0$;
        \item if $x_n<0$ and $\alpha_n=1$ ($\alpha_n=-1$), then $F(x)_{\sigma_n}\leq0$ ($F(x)_{\sigma_n}\geq0$);
        \item if $x_n>0$ and $\alpha_n=1$ ($\alpha_n=-1$), then $F(x)_{\sigma_n}\geq0$ ($F(x)_{\sigma_n}\leq0$).
    \end{enumerate}
\end{lemma}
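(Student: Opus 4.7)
The idea is to apply Lemma~\ref{c0_step-1} to the pair $(e^n, -e^n)$ for each $n$ to identify an ``image coordinate'' $\sigma_n$, and then to deduce the sign conditions from further covering arguments. First I would fix $n$ and observe that $(e^n,-e^n)$ satisfies the covering condition of Lemma~\ref{c0_step-1}, so $B(e^n,1) \cup B(-e^n,1) \supseteq B_0$. Non-expansiveness of $F$ transports each inclusion: $F\bigl(B(\pm e^n,1) \cap B_0\bigr) \subseteq B(F(\pm e^n),1)$. Combined with the bijectivity $F(B_0) = B_0$, this gives $B(F(e^n),1) \cup B(F(-e^n),1) \supseteq B_0$. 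Since $F(0) = 0$ and $F$ is injective, $F(\pm e^n) \neq 0$, so the converse direction of Lemma~\ref{c0_step-1} provides $\sigma_n \in \N$ with $F(e^n) = a_n e^{\sigma_n}$, $F(-e^n) = b_n e^{\sigma_n}$, and $a_n b_n < 0$. Define $\alpha_n := \sgn(a_n)$.

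The main obstacle is to establish $|a_n| = |b_n| = 1$, without which the cover bound $F(x)_{\sigma_n} \in [a_n - 1, b_n + 1]$ for $x$ with $x_n = 0$ does not collapse to $\{0\}$ and the zero condition~1) cannot be concluded. Toward this, I would suppose $|a_n| < 1$ and consider the preimage $p := F^{-1}(\alpha_n e^{\sigma_n})$, which lies on the unit sphere of $c_0$ by part~2) of Theorem~\ref{BnEproperties}. The cover inclusion $F(\{z \in B_0 : z_n \leq 0\}) \subseteq B(b_n e^{\sigma_n}, 1)$, combined with $|\alpha_n - b_n| = 1 + |b_n| > 1$, excludes $\alpha_n e^{\sigma_n}$ from this image and forces $p_n > 0$. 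The most delicate step is then to run a similar analysis on each axis and exploit the structure of the sphere in $c_0$, together with the bijectivity of $F$, to conclude that $p = e^n$ and hence $a_n = \alpha_n$, contradicting $|a_n| < 1$; the argument for $|b_n| = 1$ is symmetric.

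Once $|a_n| = |b_n| = 1$, items~1)--3) follow from the covering inclusions. For $x$ with $x_n = 0$, both $\|x - e^n\| \leq 1$ and $\|x + e^n\| \leq 1$, so $F(x) \in B(\alpha_n e^{\sigma_n}, 1) \cap B(-\alpha_n e^{\sigma_n}, 1)$, which intersected with $B_0$ collapses to $F(x)_{\sigma_n} = 0$. For $x_n > 0$, $\|x - e^n\| \leq 1$ yields $F(x) \in B(\alpha_n e^{\sigma_n}, 1)$ and hence $\alpha_n F(x)_{\sigma_n} \geq 0$; the case $x_n < 0$ is symmetric. The injectivity of $\sigma$ follows from item~1) applied to $x = e^m$: if $\sigma_m = \sigma_n$ for some $m \neq n$, then $(e^m)_n = 0$ would force $0 = F(e^m)_{\sigma_n} = F(e^m)_{\sigma_m} = \alpha_m$, contradicting $\alpha_m \in \{-1,1\}$. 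Surjectivity of $\sigma$ follows by applying the analogous analysis to the sphere preimages $F^{-1}(\pm e^k)$ for each $k \in \N$.
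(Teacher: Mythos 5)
Your first step (applying Lemma~\ref{c0_step-1} to the pair $(e^n,-e^n)$, pushing the covering forward through $F$, and defining $\sigma_n$ and $\alpha_n$ from the resulting single-coordinate supports) is exactly how the paper begins. The problem is the step you yourself flag as the crux: establishing $|a_n|=|b_n|=1$, i.e.\ $F(\pm e^n)=\pm\alpha_n e^{\sigma_n}$ exactly. You reduce this to showing $p:=F^{-1}(\alpha_n e^{\sigma_n})=e^n$, but you give no actual argument for that, and the tools available here cannot produce one. Theorem~\ref{BnEproperties} only controls \emph{preimages} of sphere points (and $e^n$ is not an extreme point of $B_{c_0}$, so item~3 is unavailable), coverings by unit balls only transport \emph{forward} under a non-expansive bijection, so you cannot run ``the analogous analysis'' on $F^{-1}$; and making $|a_n|$ smaller only \emph{enlarges} the coordinate slab $\{z: z_{\sigma_n}\ge a_n-1\}$ that $B(F(e^n),1)$ cuts out of $B_0$, so no covering obstruction can ever detect $a_n<1$. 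Tellingly, the paper itself only obtains $\tildeF(e^n)=e^n$ at the very end of the $c_0$ section, via Mankiewicz's theorem applied to the finite-dimensional faces $B_0^S$, and even then only under the extra hypothesis that $F^{-1}$ is continuous. Since your derivations of all three items (and of the injectivity of $\sigma$, which you deduce from item~1) rest on $|a_n|=|b_n|=1$, this is a genuine gap, not a detail.

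The way around it, which is what the paper does, is to prove items 1)--3) by a covering argument that is insensitive to the magnitudes $|a_n|,|b_n|$. For $x\in B_0^*$ with support $S$, cover $B_0$ by $B(x,1)$ together with the balls $B(s^m,1)$, $s^m=-\sgn(x_m)e^m$, $m\in S$; push the covering forward; and test it against the witness $y$ with $y_{\sigma_m}=\sgn(x_m)\alpha_m\in\{-1,1\}$ for $m\in S$ and $y_k=0$ (or $\pm1$, for item~1) elsewhere. Then $|y_{\sigma_m}-F(s^m)_{\sigma_m}|=1+|F(s^m)_{\sigma_m}|>1$ \emph{whatever} the nonzero value $F(s^m)_{\sigma_m}$ is, so $y$ escapes every ball $B(F(s^m),1)$ and must lie in $B(F(x),1)$, which yields the sign conditions; density of $B_0^*$ and continuity of $F$ then give the general case, and the same construction also delivers surjectivity of $\sigma$ (your appeal to ``the analogous analysis'' for $F^{-1}(\pm e^k)$ fails for the same pull-back reason as above). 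Injectivity of $\sigma$ is handled in the paper not via item~1) but by comparing the image segments of $[-e^m,e^m]$ and $[-e^n,e^n]$, which both contain a neighbourhood of $0$ in the $e^{\sigma_n}$-axis and would violate injectivity of $F$ if $\sigma_m=\sigma_n$.
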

\begin{proof}
    Let $n\in\mathbb{N}$ be arbitrary. Consider elements $e^n$ and $-e^n$. Note that $B_0$ is covered by the balls $B(e^n,1)$ and $B(-e^n,1)$. As $F$ is non-expansive and surjective, then $B_0$ should be also covered by the balls $B\big(F(e^n),1\big)$ and $B\big(F(-e^n),1\big)$. Moreover, item 1) of Theorem \ref{BnEproperties} implies that $F(e^n)$ and $F(-e^n)$ are non-zero. The previous lemma implies that there exists an index $k$ such that $F(e^n)_i=F(-e^n)_i=0$ for all $i\ne k$ and either $F(e^n)_k$ is positive and $F(-e^n)_k$ is negative or $F(e^n)_k$ is negative and $F(-e^n)_k$ is positive. Define $\sigma_n=k$. Define $\alpha_n=1$ if $F(e^n)_k$ is positive and $\alpha_n=-1$ otherwise.
    
    Now, we can show that if a sequence $x\in B_0$ is such that $x_i=0$ for every $i\ne n$, then $F(x)_i=0$ for all $i\ne k$. Let $x\in B_0$ be such that $x_i=0$ for every $i\ne n$. If $x_n=0$, then $x$ is zero, so $F(x)$ is also zero by item 1) of Theorem \ref{BnEproperties} and it is true that $F(x)_i=0$ for all $i\ne k$. So let us consider the case where $x_n\ne 0$. Suppose that $x_n>0$. Note that $B_0$ is covered by the balls $B(-e^n,1)$ and $B(x,1)$. As $F$ is non-expansive and surjective, then $B_0$ should be also covered by the balls $B\big(F(-e^n),1\big)$ and $B\big(F(x),1\big)$. Item 1) of Theorem \ref{BnEproperties} implies that $F(-e^n)$ and $F(x)$ are non-zero, so the previous lemma implies that $F(x)_i=0$ for every $i\ne k$. The case $x_n<0$ is analogous.
    
    Consider a function $f_n\colon[-1,1]\to[-1,1]$ defined by $f_n(t)=F(te^n)_{\sigma_n}$. The non-expansiveness and the injectivity of $F$ imply the same properties of $f_n$. Since $f_n$ is continuous and injective, then it is either strictly increasing or strictly decreasing. If $\alpha_n=1$, then $f_n$ is increasing, $f_n(-1)<0$, $f_n(0)=0$ and $f_n(1)>0$. If $\alpha_n=-1$, then $f_n$ is decreasing, $f_n(-1)>0$, $f_n(0)=0$ and $f_n(1)<0$. Since $f_n$ is continuous, then the image of $f_n$ is a segment. This segment should have the form $[a,b]$, where $a<0$ and $b>0$. Now it is easy to see that the injectivity of $F$ requires the injectivity of $\sigma$. Indeed, suppose by contrary that there exist distinct indices $m$ and $n$ such that $\sigma_m=\sigma_n=k$. Then the line segments $[-e^n,e^n]$ and $[-e^m,e^m]$ have just one common point, but their images are line segments $[ae^k,be^k]$ and $[a'e^k,b'e^k]$, where $a,a'<0$ and $b,b'>0$. These segments have more than one common point, which contradicts the injectivity of $F$.
    
    We want to show that for each $x\in B_0$ and $n\in\N$ the items 1)--3) are true. As $F$ is continuous and $B_0^*$ is dense in $B_0$, then it suffices to consider the case where $x\in B_0^*$. Denote by $S$ the set $\{n\in\N\colon x_n\ne 0\}$. Note that $S$ is finite. For every $n\in S$ let $s^n=-\sgn(x_n)e^n$. Note that $B_0$ is covered by the ball $B(x,1)$ together with the balls $B(s^n,1)$, $n\in S$. Indeed, for arbitrary $z\in B_0$, if there exists an index $n\in S$ such that either $x_n>0$ and $z_n<0$ or $x_n<0$ and $z_n>0$, then $z$ belongs to $B(s^n,1)$. Otherwise, $z$ belongs to $B(x,1)$. It follows that $B_0$ should be also covered by the balls $B\big(F(x),1\big)$ and $B\big(F(s^n),1\big)$, $n\in S$.
    
    Now, let us construct a sequence $y$ as follows:
    \begin{itemize}
        \item for $k\not\in \sigma(S)$ let $y_k=0$;
        \item for $n\in S$ let $y_{\sigma_n}=\sgn(x_n)\alpha_n$.
    \end{itemize}
    
    Note that $y_k=0$ for $k\not\in \sigma(S)$ and $y_k=1$ or $y_k=-1$ for $k\in \sigma(S)$. As $\sigma(S)$ is finite, then $y$ belongs to $B_0$. Note that $y$ does not belong to any of the balls $B\big(F(s^n),1\big)$, $n\in S$. On the other hand, $y$ is an element of $B_0$ and $B_0$ should be covered by the balls $B\big(F(x),1\big)$ and $B\big(F(s^n),1\big)$, $n\in S$. It follows that $y$ should belong to $B\big(F(x),1\big)$. This implies items 2) and 3).
    
    To prove item 1), consider an index $k\not\in \sigma(S)$. Alter the definition of $y$ such that $y_k=1$. Note that $y$ still belongs to $B_0$ and does not belong to any of the balls $B\big(F(s^n),1\big)$, $n\in S$, so it should belong to $B\big(F(x),1\big)$. This yields $F(x)_k\geq 0$. Similarly, if we take $y_k=-1$, then we get $F(x)_k\leq 0$. So we can conclude that $F(x)_k=0$. This proves item 1).
    
    Continuity of $F$, together with the fact that $B_0^*$ is dense in $B_0$, allows us to extend the result to all elements of $B_0$. Finally, we have to make sure $\sigma$ is a surjection. Suppose by contrary that there exists an index $k\not\in\sigma(\N)$. From the above argument it follows that $F(x)_k=0$ for every $x\in B_0^*$. Continuity of $F$ and the fact that $B_0^*$ is dense in $B_0$ imply that the same holds for every $x\in B_0$. This contradicts the surjectivity of $F$. Therefore, $\sigma$ should be surjective.
\end{proof}

The previous lemma fixes a bijection $\sigma\colon\mathbb{N}\to\mathbb{N}$ and a sequence $\alpha\colon\mathbb{N}\to\{-1,1\}$. Let $\calA$ be the corresponding isometric automorphism of $c_0$ defined by $\calA(x)_{\sigma_n}=\alpha_n x_n$. Denote by $G$ the restriction of $\calA$ to $B_0$, which is an isometric bijection from $B_0$ onto itself. Our goal is to show that $F=G$. This is equivalent to showing that $G^{-1}\circ F$ is the identity map of $B_0$. Denote the map $G^{-1}\circ F$ by $\tildeF$. Note that the definition of $\tildeF$ implies that $\tildeF$ is a non-expansive bijection from $B_0$ onto itself. The previous lemma implies that $\tildeF$ has the following properties.

\begin{lemma}\label{c0_step1}
    For each $x\in B_0$ and $n\in\N$,
    \begin{enumerate}[1)]
        \item if $x_n=0$, then $\tildeF(x)_n=0$;
        \item if $x_n<0$, then $\tildeF(x)_n\leq0$;
        \item if $x_n>0$, then $\tildeF(x)_n\geq0$.
    \end{enumerate}
\end{lemma}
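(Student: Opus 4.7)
The plan is simply to unwind the definition $\tildeF = G^{-1}\circ F$ and invoke the previous lemma coordinate by coordinate.

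First I would write down explicitly what $G^{-1}$ does. Since $\calA(x)_{\sigma_n} = \alpha_n x_n$ and $\alpha_n \in \{-1,1\}$, inverting gives $\calA^{-1}(y)_n = \alpha_n y_{\sigma_n}$, and the same formula holds for $G^{-1}$ on $B_0$. Consequently
\[
\tildeF(x)_n \;=\; \alpha_n \, F(x)_{\sigma_n}
\]
for every $x \in B_0$ and every $n \in \N$. This is the only identity needed; the rest is a direct case check.

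Next I would fix $x \in B_0$ and $n \in \N$ and split into the three cases according to the sign of $x_n$. If $x_n = 0$, item~1) of Lemma~\ref{c0_step0} gives $F(x)_{\sigma_n} = 0$, hence $\tildeF(x)_n = \alpha_n\cdot 0 = 0$, proving item~1). If $x_n < 0$, I would split further on the value of $\alpha_n$: when $\alpha_n = 1$, item~2) of Lemma~\ref{c0_step0} yields $F(x)_{\sigma_n} \leq 0$ and so $\tildeF(x)_n = F(x)_{\sigma_n} \leq 0$; when $\alpha_n = -1$, the same item gives $F(x)_{\sigma_n} \geq 0$ and so $\tildeF(x)_n = -F(x)_{\sigma_n} \leq 0$. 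Either way item~2) holds. The case $x_n > 0$ is fully symmetric using item~3) of Lemma~\ref{c0_step0}.

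There is no real obstacle here — this lemma is essentially a reformulation of Lemma~\ref{c0_step0} after absorbing the permutation $\sigma$ and the signs $\alpha_n$ into the isometric automorphism~$G$. The only thing one has to be slightly careful about is getting the direction of the index swap in $G^{-1}$ right, so that the $n$-th coordinate of $\tildeF(x)$ is controlled by the $n$-th coordinate of $x$ rather than by some other one.
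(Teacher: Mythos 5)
Your proposal is correct and matches the paper's intent exactly: the paper states this lemma without proof as an immediate consequence of Lemma~\ref{c0_step0}, and your computation $\tildeF(x)_n=\alpha_nF(x)_{\sigma_n}$ followed by the sign case check is precisely the omitted verification.
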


The three properties listed above imply the following property of $\tildeF^{-1}$.

\begin{lemma}\label{c0_step1a}
    For each $y\in B_0$ and $n\in\N$, if $y_n<0$, then $\tildeF^{-1}(y)_n<0$, and if $y_n>0$, then $\tildeF^{-1}(y)_n>0$.
\end{lemma}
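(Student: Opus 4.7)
The plan is to argue by contrapositive, applying each of the three items of Lemma \ref{c0_step1} to rule out every other sign possibility. Let $y\in B_0$ and $n\in\N$ be given, and set $x=\tildeF^{-1}(y)$, so that $\tildeF(x)=y$.

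Suppose first that $y_n<0$. I want to show $x_n<0$, so I rule out the two alternatives. If $x_n=0$, then item 1) of Lemma \ref{c0_step1} forces $\tildeF(x)_n=0$, i.e.\ $y_n=0$, which contradicts $y_n<0$. If instead $x_n>0$, then item 3) yields $\tildeF(x)_n\geq 0$, i.e.\ $y_n\geq 0$, again a contradiction. Hence $x_n<0$.

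The case $y_n>0$ is symmetric: $x_n=0$ is excluded by item 1), and $x_n<0$ is excluded by item 2) (which would give $y_n\leq 0$). So $x_n>0$.

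There is essentially no obstacle here; the lemma is a direct contrapositive reformulation of Lemma \ref{c0_step1}, and the only mild point to notice is that item 1) is what upgrades the conclusion from a weak inequality to a strict one (since items 2) and 3) alone only give $\leq$ and $\geq$).
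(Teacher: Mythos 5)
Your proof is correct and is exactly the argument the paper leaves implicit (the lemma is stated there without proof, as an immediate consequence of Lemma \ref{c0_step1}): apply the trichotomy to $x=\tildeF^{-1}(y)$ and eliminate the two impossible signs. Your closing remark that item 1) is what makes the conclusion strict is also accurate.
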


Now we are going to continue collecting some properties that describe the behaviour of $\tildeF$.

\begin{lemma}\label{c0_step2}
    For each $y\in B_0$ and $n\in\N$, if $y_n=1$ and $y_i\ne 0$ for all $i\ne n$, then $\tildeF^{-1}(y)_n=1$, and if $y_n=-1$ and $y_i\ne 0$ for all $i\ne n$, then $\tildeF^{-1}(y)_n=-1$.
\end{lemma}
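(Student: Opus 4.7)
The plan is to handle the case $y_n = 1$; the case $y_n = -1$ follows by the same argument with signs reversed. Let $x = \tildeF^{-1}(y)$. Item 2) of Theorem \ref{BnEproperties} gives $\|x\|_\infty = 1$, and Lemma \ref{c0_step1a} together with the hypothesis $y_i \ne 0$ for every $i \in \N$ (which includes $i = n$, since $y_n = 1$) implies $\sgn(x_i) = \sgn(y_i)$ for every $i$; in particular, $x_n > 0$.

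Let $M = \{i \in \N \colon |x_i| = 1\}$. The set $M$ is finite because $x \in c_0$ and nonempty because $\|x\|_\infty = 1$. The task reduces to showing $n \in M$. I would argue by contradiction: assume $|x_n| < 1$, so $n \notin M$, and show that $M$ must then be infinite. This is done by an inductive construction. Given any distinct indices $m_1, \ldots, m_k \in M \setminus \{n\}$ (the case $k = 0$ being permitted), consider the test element
\[w = \sum_{j=1}^k \sgn(y_{m_j}) e^{m_j},\]
which lies in $B_0$. By item 1) of Lemma \ref{c0_step1}, $\tildeF(w)_i = 0$ for every $i \notin \{m_1, \ldots, m_k\}$; since $n$ is not among these indices and $y_n = 1$, we get
\[\|\tildeF(w) - y\|_\infty \geq |\tildeF(w)_n - y_n| = 1,\]
and non-expansiveness of $\tildeF$ yields $\|w - x\|_\infty \geq 1$. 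At each index $m_j$, however, $w_{m_j} = \sgn(y_{m_j}) = \sgn(x_{m_j})$ and $|x_{m_j}| = 1$, so $w_{m_j} - x_{m_j} = 0$; hence $\|w - x\|_\infty = \sup_{i \notin \{m_1, \ldots, m_k\}} |x_i| \geq 1$, and this supremum is attained at some index $m_{k+1} \notin \{m_1, \ldots, m_k\}$ with $|x_{m_{k+1}}| = 1$. Because $|x_n| < 1$, this new index is necessarily distinct from $n$, and thus extends the list inside $M \setminus \{n\}$. Iterating produces infinitely many elements of $M$, contradicting $x \in c_0$.

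The crux is choosing the test element $w$ supported precisely on a known subset of $M \setminus \{n\}$: Lemma \ref{c0_step1} pins the support of $\tildeF(w)$ there as well, so the peak $|y_n| = 1$ is never cancelled out in $\|\tildeF(w) - y\|_\infty$, while matching signs and magnitudes at the chosen indices ensure the corresponding contribution to $\|w - x\|_\infty$ vanishes. This asymmetry forces a new maximizing coordinate of $x$ at each step, which is what drives the contradiction. Beyond identifying this test element, the rest is routine bookkeeping with sign-matching coordinates.
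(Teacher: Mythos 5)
Your proof is correct, but it goes by a genuinely different route than the paper. The paper pairs $y$ with its reflection $z$ given by $z_n=-1$ and $z_i=y_i$ for $i\ne n$: since $\|y-z\|=2$, the preimages must also be at distance $2$; Lemma~\ref{c0_step1a} forces $\tildeF^{-1}(y)_i$ and $\tildeF^{-1}(z)_i$ to share the sign of $y_i$ for every $i\ne n$, so those coordinates contribute less than $1$, and the entire gap of $2$ must occur at coordinate $n$, with the sign again pinned down by Lemma~\ref{c0_step1a}. Your argument instead tests $x=\tildeF^{-1}(y)$ against elements $w$ supported on the known part of $M=\{i:|x_i|=1\}$, uses the support-preservation in Lemma~\ref{c0_step1} to keep $|\tildeF(w)_n-y_n|=1$, and extracts a fresh norm-attaining coordinate of $x$ at each step, contradicting finiteness of $M$. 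All steps check out (including the base case $w=0$, which already yields $\|x\|=1$). The trade-off: your proof leans on two features specific to $c_0$ — that every element attains its sup norm and that $\{i:|x_i|=1\}$ is finite — whereas the paper's two-point argument avoids norm attainment entirely and is reused verbatim for the space $c$ (Lemma~\ref{c_step2}), where your construction would break down.
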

\begin{proof}
    Let $y\in B_0$ be such that $y_n=1$ and $y_i\ne 0$ for all $i\ne n$. Consider a sequence $z\in B_0$ defined by $z_n=-1$ and $z_i=y_i$ for all $i\ne n$. Denote $\tildeF^{-1}(y)$ and $\tildeF^{-1}(z)$ by $y'$ and $z'$. The distance between $y$ and $z$ equals two, so the distance between $y'$ and $z'$ should be also equal to two. Consider an index $i\ne n$. We know that $y_i\ne 0$, so $y_i$ is either positive or negative. If $y_i>0$, then Lemma \ref{c0_step1a} implies that $y'_i$ and $z'_i$ should be positive too. Similarly, if $y_i<0$, then Lemma \ref{c0_step1a} implies that $y'_i$ and $z'_i$ should be also negative. In either case we have $|y'_i-z'_i|<1$. It follows, that for the distance between $y'$ and $z'$ to be equal to two, we need to have $|y'_n-z'_n|=2$. This means that either $y'_n=1$ and $z'_n=-1$ or $y'_n=-1$ and $z'_n=1$. However, Lemma \ref{c0_step1a} excludes the second case. Therefore, we have $y'_n=1$ as wanted. The case $y_n=-1$ is similar.
\end{proof}

\begin{lemma}\label{c0_step3}
    For each $x\in B_0$ and $n\in\mathbb{N}$, if $x_n<0$, then $\tildeF(x)_n\in[x_n,0]$, and if $x_n>0$, then $\tildeF(x)_n\in[0,x_n]$.
\end{lemma}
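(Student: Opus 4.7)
The properties $\tildeF(x)_n \geq 0$ when $x_n > 0$ and $\tildeF(x)_n \leq 0$ when $x_n < 0$ are already supplied by Lemma \ref{c0_step1}, so the real content of the statement is the matching upper bound $|\tildeF(x)_n| \leq |x_n|$. By the sign symmetry of the setup, it suffices to treat the case $x_n > 0$ and show $\tildeF(x)_n \leq x_n$; the case $x_n < 0$ is analogous.

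The plan is to construct a test point $z \in B_0$ with $z_n = -1$ whose pre-image under $\tildeF$ can be controlled coordinate-wise, and then to compare two estimates of $\|\tildeF^{-1}(z) - x\|_\infty$. Fix a positive null sequence $(\delta_i)$ and define $z$ by $z_n = -1$ and, for each $i \neq n$, $z_i = \tildeF(x)_i$ whenever $\tildeF(x)_i \neq 0$, while $z_i = \delta_i \sgn(x_i)$ when $\tildeF(x)_i = 0$ and $x_i \neq 0$ (and $z_i = \delta_i$ in the remaining case $x_i = 0$). Because $\tildeF(x) \in B_0$ and $\delta_i \to 0$, we have $z \in B_0$; moreover $z_i \neq 0$ for every $i \neq n$, and $z_i$ shares the sign of $x_i$ whenever $x_i \neq 0$, using Lemma \ref{c0_step1} to ensure that $\tildeF(x)_i$ has the sign of $x_i$ whenever $\tildeF(x)_i \neq 0$.

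Lemma \ref{c0_step2} applied to $z$ then gives $\tildeF^{-1}(z)_n = -1$, and Lemma \ref{c0_step1a} gives that $\tildeF^{-1}(z)_i$ has the same sign as $z_i$ for every $i \neq n$. Consequently, for each $i \neq n$ the numbers $\tildeF^{-1}(z)_i$ and $x_i$ lie on the same side of zero (possibly with $x_i = 0$), and since both belong to $[-1,1]$ we get $|\tildeF^{-1}(z)_i - x_i| \leq 1$. The only coordinate that can contribute more is $i = n$, where $|\tildeF^{-1}(z)_n - x_n| = 1 + x_n$. Hence $\|\tildeF^{-1}(z) - x\|_\infty \leq 1 + x_n$. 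On the other hand, non-expansiveness of $\tildeF$ yields
\[\|\tildeF^{-1}(z) - x\|_\infty \geq \|z - \tildeF(x)\|_\infty \geq |z_n - \tildeF(x)_n| = 1 + \tildeF(x)_n,\]
where the last equality uses $\tildeF(x)_n \geq 0$. Comparing the two bounds gives $\tildeF(x)_n \leq x_n$, as required.

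The delicate point to which I expect to have to pay the most attention is the construction of $z$: one cannot simply take $z_i = \tildeF(x)_i$ for $i \neq n$, because Lemma \ref{c0_step2} demands $z_i \neq 0$ for every $i \neq n$, and the small perturbation introduced on exactly those indices where $\tildeF(x)_i = 0$ must carry the sign of $x_i$, otherwise the estimate $|\tildeF^{-1}(z)_i - x_i| \leq 1$ can fail. Once this bookkeeping is handled, the two bounds on $\|\tildeF^{-1}(z) - x\|_\infty$ collide immediately to yield the inequality.
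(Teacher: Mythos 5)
Your proof is correct and follows essentially the same route as the paper: both construct a test point with $-1$ in the $n$-th coordinate and nonzero entries of the correct signs elsewhere, apply Lemma \ref{c0_step2} and Lemma \ref{c0_step1a} to control its preimage coordinate-wise, and compare the two resulting bounds on the distance to $x$. Your specific choice of $z$ (copying $\tildeF(x)_i$ and perturbing only its zero coordinates) is just one admissible instance of the paper's more loosely specified construction, so the arguments coincide.
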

\begin{proof}
    Let us consider the case $x_n>0$. Lemma \ref{c0_step1} implies that $\tildeF(x)_n\geq 0$, so it remains to show that $\tildeF(x)_n\leq x_n$. Let us construct a sequence $y\in B_0$ as follows. First, let $y_n=-1$. Then, for every $i\ne n$
    \begin{itemize}
        \item choose $y_i$ from $[-1,0)$ if $x_i<0$;
        \item choose $y_i$ from $(0,1]$ if $x_i>0$;
        \item choose $y_i$ from $[-1,1]\setminus\{0\}$ if $x_i=0$.
    \end{itemize}
    For $y$ to belong to $B_0$, it is important to choose $y_i$ such that the sequence converges to zero. Clearly, such choice is possible. One possible choice is to define $y_i=1/i$ for $x_i\geq 0$ and $y_i=-1/i$ for $x_i<0$.
    
    Denote $\tildeF^{-1}(y)$ by $y'$. Note that $y_n=-1$ and $y_i\ne 0$ for every $i\ne n$. Lemma \ref{c0_step2} implies that $y'_n=-1$. Let us show that $|x_i-y'_i|\leq 1$ for every $i\ne n$. Consider an index $i$ distinct from $n$. If $x_i<0$, then $y_i<0$, so Lemma \ref{c0_step1a} implies that $y'_i<0$. As $x_i$ and $y'_i$ are both negative, then $|x_i-y'_i|<1$. Similarly, if $x_i>0$, then $y_i>0$, so Lemma \ref{c0_step1a} implies that $y'_i>0$. As $x_i$ and $y'_i$ are both positive, then $|x_i-y'_i|<1$. Finally, if $x_i=0$, then the inequality $|x_i-y'_i|\leq 1$ is obvious. Now we see that the distance between $x$ and $y'$ is equal to $1+x_n$. Indeed, $|x_n-y'_n|=1+x_n$ and $|x_i-y'_i|\leq 1$ for $i\ne n$. This implies that the distance between $\tildeF(x)$ and $y$ is at most $1+x_n$, therefore $|\tildeF(x)_n-y_n|\leq 1+x_n$, which yields $\tildeF(x)_n\leq x_n$. This concludes the proof for the case $x_n>0$. The proof for the case $x_n<0$ is similar.
\end{proof}

The last lemma implies the following properties of the inverse function.

\begin{lemma}\label{c0_step3a}
    For each $y\in B_0$ and $n\in\N$, if $y_n<0$, then $\tildeF^{-1}(y)_n\in[-1,y_n]$, and if $y_n>0$, then $\tildeF^{-1}(y)_n\in[y_n,1]$.
\end{lemma}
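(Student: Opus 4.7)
The plan is to observe that this is an immediate consequence of the two preceding lemmas, applied in sequence. Given $y\in B_0$ with $y_n<0$, set $x=\tildeF^{-1}(y)$ and try to squeeze $x_n$ between $-1$ and $y_n$.

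First I would use Lemma \ref{c0_step1a} to pin down the sign of $x_n$: since $y_n<0$, that lemma gives $x_n=\tildeF^{-1}(y)_n<0$. With this sign information in hand, Lemma \ref{c0_step3} applies to $x$ at coordinate $n$ and yields $\tildeF(x)_n\in[x_n,0]$. But $\tildeF(x)_n=y_n$ by the definition of $x$, so the inclusion $y_n\in[x_n,0]$ translates to $x_n\leq y_n$. Combined with the trivial lower bound $x_n\geq -1$ coming from $x\in B_0$, this gives $\tildeF^{-1}(y)_n\in[-1,y_n]$, which is the desired conclusion in the first case.

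The case $y_n>0$ is handled by the symmetric argument: Lemma \ref{c0_step1a} forces $\tildeF^{-1}(y)_n>0$, then Lemma \ref{c0_step3} gives $y_n=\tildeF(x)_n\in[0,x_n]$, hence $x_n\geq y_n$, and the upper bound $x_n\leq 1$ is automatic.

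Since the argument is purely a two-step rewriting using already-proved results, there is essentially no obstacle — the main thing to be careful about is simply to invoke Lemma \ref{c0_step1a} before Lemma \ref{c0_step3}, because Lemma \ref{c0_step3} has a sign hypothesis on the argument of $\tildeF$, not on the value.
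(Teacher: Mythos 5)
Your proposal is correct and is exactly the argument the paper intends: the paper states Lemma \ref{c0_step3a} without proof, noting only that it follows from Lemma \ref{c0_step3}, and your two-step derivation (first fixing the sign of $\tildeF^{-1}(y)_n$ via Lemma \ref{c0_step1a}, then applying Lemma \ref{c0_step3} to $x=\tildeF^{-1}(y)$) is the natural way to fill in that omitted verification. Your closing remark about the order of the two lemmas is also the right point of care, since Lemma \ref{c0_step3} hypothesizes the sign of the preimage coordinate, not of $y_n$.
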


The important feature of $c_0$ is that every element attains its norm. That is, for every $x\in c_0$ there is an index $n$ such that $|x_n|=\|x\|$. In other words, the supremum used to define the norm of $x$ is actually a maximum. We are going to make use of this feature in the proof of the next proposition.

\begin{lemma}\label{c0_step4}
    Let $y\in B_0$ be such that the set $S=\{i\in\N\colon y_i=0\}$ is finite. Then $\tildeF^{-1}(y)_i=0$ for each $i\in S$.
\end{lemma}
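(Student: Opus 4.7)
The plan is to argue by strong induction on the cardinality $|S|$; the case $|S|=0$ is vacuous. For the inductive step I fix $y \in B_0$ with $|S| = k \geq 1$, set $x = \tildeF^{-1}(y)$, and suppose for contradiction that $S' := \{j \in S : x_j \neq 0\}$ is nonempty. Fix some $i_0 \in S'$.

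The key construction is a single auxiliary point $y^N \in B_0$ whose zero-set is strictly smaller than $S$, so that the induction hypothesis applies, and which is at $\ell_\infty$-distance exactly $1$ from $y$. Let $L = \{j \in S : |x_j| = 1\}$, $L' = L \cup \{i_0\}$, and define $y^N_j = \sgn(x_j)$ for $j \in L'$ and $y^N_j = y_j$ otherwise. Then $\|y^N - y\|_\infty = 1$, and the zero-set of $y^N$ is $S^N = S \setminus L'$ with $|S^N| \leq k - 1$. The induction hypothesis applied to $y^N$ yields $x^N_j = 0$ for $j \in S^N$, where $x^N := \tildeF^{-1}(y^N)$; at each $j \in L'$, Lemma \ref{c0_step3a} applied with $y^N_j = \pm 1$ pins $x^N_j = \sgn(x_j)$ exactly.

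Non-expansiveness of $\tildeF$ gives $\|x^N - x\|_\infty \geq \|y^N - y\|_\infty = 1$, and the crux is a coordinate-wise check that $|x^N_j - x_j| < 1$ everywhere. On $L$ the difference vanishes; at $j = i_0$ in the case $|x_{i_0}| < 1$ it equals $1 - |x_{i_0}| < 1$; on $S \setminus L'$ it equals $|x_j|$, which is $< 1$ by the very definition of $L$; and on $\mathbb{N} \setminus S$, Lemmas \ref{c0_step1a} and \ref{c0_step3a} force both $x_j$ and $x^N_j$ into the interval $[|y_j|,1]\cdot\sgn(y_j)$, so the difference is at most $1 - |y_j| < 1$.

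Here the norm-attaining feature of $c_0$ highlighted just before the lemma statement becomes decisive: since $x^N - x$ lies in $c_0$, the supremum $\|x^N - x\|_\infty$ is actually a maximum attained at some single index, and by the strict coordinate bounds above this maximum is strictly less than $1$, contradicting $\|x^N - x\|_\infty \geq 1$. The anticipated main obstacle is precisely the coordinates $j \in S \setminus \{i_0\}$ with $|x_j| = 1$, since modifying $y$ only at $i_0$ would allow $|x^N_j - x_j|$ to reach $1$ there; absorbing all of $L$ into the modification set, and feeding the resulting $y^N$ into the induction, is what restores strict inequality at every coordinate simultaneously.
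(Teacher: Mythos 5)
Your proof is correct, and while it shares the paper's skeleton -- induction on $|S|$, an auxiliary point to which the induction hypothesis applies, a coordinate-wise strict bound combined with the norm-attainment of $c_0$, and a contradiction with non-expansiveness via a pair at distance exactly $1$ -- the auxiliary point is built differently. The paper's auxiliary sequence $z$ is constructed from scratch out of the signs of $x=\tildeF^{-1}(y)$: a fresh small nonzero value of the same sign as $x_i$ wherever $x_i\ne0$, and zero wherever $x_i=0$. There the bound $|x_i-\tildeF^{-1}(z)_i|<1$ at coordinates with $x_i\ne0$ comes from Lemma \ref{c0_step1a} (strictly the same sign on both sides) and holds automatically even when $|x_i|=1$, so the paper never needs your set $L$, and the zero set shrinks by exactly one index per step. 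Your $y^N$ instead perturbs $y$ itself, only on the finite set $L'\subset S$, keeping $y^N_j=y_j$ elsewhere; this forces you to absorb all of $L=\{j\in S:|x_j|=1\}$ into the modification (otherwise the induction hypothesis would give $x^N_j=0$ against $|x_j|=1$, a non-strict bound), and to control $\N\setminus S$ via Lemma \ref{c0_step3a} rather than \ref{c0_step1a}. The trade-off: your construction makes $\|y^N-y\|=1$ exact by design and may dispose of several zeros at once, at the cost of the extra $L$ bookkeeping, which your sign-mirroring identification of the ``main obstacle'' handles correctly; the paper's choice of $z$ sidesteps that bookkeeping entirely.
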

\begin{proof}
    Let us proceed by induction on the number of elements of $S$. If $S$ is empty, then there is nothing to prove. Now, let $N$ be a non-negative integer and suppose that the claim holds whenever $S$ has up to $N$ elements. Let us show that the claim also holds when $S$ has $N+1$ elements. So let $y\in B_0$ be a sequence such that the set $S$ has $N+1$ elements. Denote $\tildeF^{-1}(y)$ by $y'$. We are going to use a proof by contradiction. Suppose by contrary that there exists $n\in S$ such that $y'_n\ne 0$. Construct a sequence $z\in B_0$ as follows. First, set $z_n=1$ if $y'_n>0$ and $z_n=-1$ if $y'_n<0$. Then, for each $i\ne n$
    \begin{itemize}
        \item choose $z_i$ from $[-1,0)$ if $y'_i<0$;
        \item choose $z_i$ from $(0,1]$ if $y'_i>0$;
        \item set $z_i=0$ if $y'_i=0$.
    \end{itemize}
    For $z$ to belong to $B_0$, it is important to choose $z_i$ such that the sequence converges to zero. Clearly, such choice is possible. One possible choice is to define $z_i=1/i$ for $y'_i>0$ and $z_i=-1/i$ for $y'_i<0$.
    
    By definition, $z_n\ne 0$. If $i\in\N\setminus S$, then $y_i\ne 0$, so Lemma \ref{c0_step1a} implies that $y_i'\ne 0$ and the definition of $z$ implies that $z_i\ne 0$. It follows that the set $\{i\in\N\colon z_i=0\}$ is contained in the set $S\setminus\{n\}$. Therefore, the set $\{i\in\N\colon z_i=0\}$ has at most $N$ elements. This means that the induction hypothesis can be applied to $z$, so we know that $\tildeF^{-1}(z)_i=0$ whenever $z_i=0$.
    
    Denote $\tildeF^{-1}(z)$ by $z'$. Let us show that the distance between $y'$ and $z'$ is smaller than one. Since every element of $c_0$ attains its norm, it suffices to show that $|y'_i-z'_i|<1$ for each $i\in\N$. Let $i\in\N$ be arbitrary. Consider the case $y'_i>0$. The definition of $z$ implies that $z_i>0$. As $z_i>0$, then Lemma \ref{c0_step1a} implies that $z'_i>0$. Since $z'_i$ and $y'_i$ are both positive, then $|y'_i-z'_i|<1$. Similarly, if $y'_i<0$, then the definition of $z$ implies that $z_i<0$. As $z_i<0$, then Lemma \ref{c0_step1a} implies that $z'_i<0$. Since $z'_i$ and $y'_i$ are both negative, then $|y'_i-z'_i|<1$. Finally, if $y'_i=0$, then the definition of $z$ implies that $z_i=0$. Since $z_i=0$, then the application of the induction hypothesis to $z$ gives $z'_i=0$, which makes the inequality $|y'_i-z'_i|<1$ obvious. This shows that the distance between $y'$ and $z'$ is smaller than one.
    
    If the distance between $y'$ and $z'$ is smaller than one, then the distance between $y$ and $z$ should be also smaller than one, but we have $|y_n-z_n|=1$, which is a contradiction.
\end{proof}

\begin{lemma}\label{c0_step5}
    Let $x\in B_0$ be such that the set $S=\{i\in \N\colon x_i=0\}$ is finite. Then $\tildeF(x)_i\ne 0$ for each $i\in \N\setminus S$.
\end{lemma}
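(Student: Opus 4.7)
My plan is proof by contradiction. Suppose some $n \in \N\setminus S$ satisfies $\tildeF(x)_n = 0$; by symmetry I may assume $x_n > 0$. The idea is to choose a specific $z \in B_0$ and combine Lemmas \ref{c0_step1a}, \ref{c0_step3a}, and \ref{c0_step4} to pin down $x^* := \tildeF^{-1}(z)$ so precisely that $\|x - x^*\| < 1$, even though $\|\tildeF(x) - z\| \geq 1$ will force $\|x - x^*\| \geq 1$ by non-expansiveness.

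I would define $z$ by $z_n = 1$, $z_i = 0$ for $i \in S$, and $z_i = \sgn(x_i)/i$ for $i \in \N\setminus(S\cup\{n\})$. Since $S$ is finite, $|z_i| \leq 1$ and $z_i \to 0$, so $z \in B_0$; moreover the zero set of $z$ is exactly the finite set $S$. Now Lemma \ref{c0_step3a} applied at coordinate $n$ (where $z_n = 1$) forces $x^*_n = 1$; Lemma \ref{c0_step4}, applicable because the zero set of $z$ is finite, forces $x^*_i = 0$ for all $i \in S$; and Lemma \ref{c0_step1a} gives $\sgn(x^*_i) = \sgn(z_i) = \sgn(x_i)$ for every $i \in \N\setminus(S\cup\{n\})$.

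Next I compare distances. On $S$, $x$ and $x^*$ agree. At $n$, $|x_n - x^*_n| = 1 - x_n < 1$. At every other index, $x_i$ and $x^*_i$ are both nonzero with the same sign and lie in $[-1,1]$, so $|x_i - x^*_i| < 1$. Because $x - x^* \in c_0$ attains its norm, this pointwise strict bound upgrades to $\|x - x^*\| < 1$. On the other hand $\|\tildeF(x) - z\| \geq |\tildeF(x)_n - z_n| = 1$, so non-expansiveness of $\tildeF$ gives $1 \leq \|\tildeF(x) - \tildeF(x^*)\| \leq \|x - x^*\|$, the desired contradiction.

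The main obstacle I expect is ensuring that the pointwise gap is strictly below $1$ on \emph{every} coordinate, not merely on most. The dangerous coordinates are those in $S$, where $x_i = 0$ would in principle allow $|x^*_i|$ to reach $1$. Setting $z_i = 0$ on $S$ (rather than a small nonzero value) is precisely what makes Lemma \ref{c0_step4} applicable and forces $x^*_i = 0$ there, neutralising that risk; the norm-attainment property of $c_0$ then converts the pointwise strict inequality into the strict norm inequality $\|x - x^*\| < 1$.
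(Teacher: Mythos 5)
Your proposal is correct and follows essentially the same route as the paper: the paper constructs the same kind of auxiliary sequence (your $z$ is one admissible choice of its $y$), applies Lemma \ref{c0_step4} to it to kill the coordinates in $S$, uses Lemma \ref{c0_step1a} for sign agreement elsewhere, and upgrades the pointwise strict bounds to $\|x-\tildeF^{-1}(y)\|<1$ via norm attainment before invoking non-expansiveness. The only cosmetic differences are that you phrase it as a contradiction and pin down $x^*_n=1$ exactly via Lemma \ref{c0_step3a}, whereas the paper argues directly and only needs the sign of that coordinate.
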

\begin{proof}
    Let $n\in\N\setminus S$ be arbitrary. Construct a sequence $y\in B_0$ as follows. First, set $y_n=\sgn(x_n)$. Then, for each $i\ne n$
    \begin{itemize}
        \item choose $y_i$ from $[-1,0)$ if $x_i<0$;
        \item choose $y_i$ from $(0,1]$ if $x_i>0$;
        \item set $y_i=0$ if $x_i=0$.
    \end{itemize}
    For $y$ to belong to $B_0$, it is important to choose $y_i$ such that the sequence converges to zero. Clearly, such choice is possible. Denote $\tildeF^{-1}(y)$ by $y'$. Note that the sequence $y$ has only finitely many zeros. Therefore, Lemma \ref{c0_step4} can be applied to $y$. This means that $y'_i=0$ for every $i\in S$. Now, we can show that the distance between $x$ and $y'$ is smaller than one -- the argument is identical to the one that appeared in the proof of the previous lemma. Since the distance between $x$ and $y'$ is smaller than one, then the distance between $\tildeF(x)$ and $y$ should be also smaller than one. In particular, we should have $|\tildeF(x)_n-y_n|<1$, which yields $\tildeF(x)_n\ne 0$.
\end{proof}

Lemma \ref{c0_step4} says that if we have $y_n=0$, then we should also have $\tildeF^{-1}(y)_n=0$, provided that the sequence $y$ has only finitely many zeros. If we assume the continuity of $\tildeF^{-1}$, then we can get rid of that additional assumption.

\begin{lemma}\label{c0_step6}
    Suppose that $\tildeF^{-1}$ is continuous. Let $y\in B_0$ and $n\in\N$ be such that $y_n=0$. Then $\tildeF^{-1}(y)_n=0$.
\end{lemma}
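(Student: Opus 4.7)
The plan is to approximate $y$ by a sequence $y^{(k)} \in B_0$ each of which has only finitely many zero coordinates, with $y^{(k)}_n = 0$ forced. Then Lemma \ref{c0_step4} gives $\tildeF^{-1}(y^{(k)})_n = 0$ for every $k$, and continuity of $\tildeF^{-1}$ transports this equality to the limit.

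Concretely, I would construct $y^{(k)}$ as follows. Set $y^{(k)}_n = 0$. For every index $i \ne n$ with $y_i \ne 0$, set $y^{(k)}_i = y_i$. For every index $i \ne n$ with $y_i = 0$, choose a non-zero value $\varepsilon_{i,k}$ with $|\varepsilon_{i,k}| \le \min(1/k,1/i)$; for instance $\varepsilon_{i,k} = \min(1/k,1/i)$ works. Then $y^{(k)}$ lies in $B_0$ (the coordinates tend to zero because both the untouched coordinates do and $\min(1/k,1/i) \to 0$ as $i \to \infty$), the only zero coordinate of $y^{(k)}$ is at position $n$, and $\|y^{(k)} - y\|_\infty \le 1/k$, so $y^{(k)} \to y$.

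By Lemma \ref{c0_step4} applied to $y^{(k)}$, since $\{i : y^{(k)}_i = 0\} = \{n\}$ is finite and $n$ belongs to it, we obtain $\tildeF^{-1}(y^{(k)})_n = 0$ for every $k$. Continuity of $\tildeF^{-1}$ now yields $\tildeF^{-1}(y^{(k)}) \to \tildeF^{-1}(y)$ in $c_0$, so in particular the $n$-th coordinates converge; the limit of the constant zero sequence is $0$, hence $\tildeF^{-1}(y)_n = 0$.

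There is no real obstacle here — the argument is a routine density-plus-continuity upgrade of Lemma \ref{c0_step4}. The one point requiring a little care is the construction of the approximants: we must simultaneously force $y^{(k)}_n = 0$, make every other previously-zero coordinate non-zero, keep $y^{(k)} \in c_0$, and keep $\|y^{(k)} - y\|_\infty$ small. The choice $\varepsilon_{i,k} = \min(1/k,1/i)$ handles all four constraints at once.
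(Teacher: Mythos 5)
Your proof is correct and follows exactly the paper's argument: approximate $y$ by elements of $B_0$ whose only zero coordinate is at index $n$, apply Lemma \ref{c0_step4} to each approximant, and pass to the limit using the continuity of $\tildeF^{-1}$. The only difference is the cosmetic choice of perturbation ($\min(1/k,1/i)$ versus the paper's $1/(ik)$), which changes nothing.
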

\begin{proof}
    It is possible to construct a sequence $\psi\colon \N\to B_0$ such that $\psi^k_n=0$, $\psi^k_i\ne 0$ for $i\ne n$ and $\psi^k\to y$. One possible choice is to define
    
    \[
        \psi^k_i=
        \begin{cases}
            0,   & i=n,\\
            y_i, & i\ne n,\ y_i\ne 0,\\
            1/(ik), & i\ne n,\ y_i=0.
        \end{cases}
    \]
    
    For each $k\in\N$, the sequence $\psi^k\in B_0$ has exactly one zero at index $n$, so Lemma \ref{c0_step4} implies that $\tildeF^{-1}(\psi^k)_n=0$. Since $\psi^k\to y$ and $\tildeF^{-1}$ is continuous, then $\tildeF^{-1}(\psi^k)\to \tildeF^{-1}(y)$. This implies the convergence $\tildeF^{-1}(\psi^k)_n\to \tildeF^{-1}(y)_n$. As $\tildeF^{-1}(\psi^k)_n=0$ for every $k\in\N$, then it follows that $\tildeF^{-1}(y)_n=0$.
\end{proof}

Now we can prove the main result.

\begin{theorem}\label{c0_main_result}
    If $F^{-1}$ is continuous, then $F$ is an isometry.
\end{theorem}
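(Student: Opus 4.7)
The strategy is to use the continuity of $F^{-1}$ to upgrade the one-sided zero-preservation of Lemma~\ref{c0_step1} to a biconditional, which reduces the problem to a family of finite-dimensional $\ell_\infty$ slices on which compactness, Mankiewicz's theorem, and the sign/magnitude constraints already at hand collapse $\tildeF$ to the identity.

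Since $G$ is an isometric bijection of $B_0$ and $F^{-1}$ is continuous by hypothesis, $\tildeF^{-1}=F^{-1}\circ G$ is continuous. Lemma~\ref{c0_step6} therefore supplies the missing direction of the zero-preservation: combined with item~1) of Lemma~\ref{c0_step1}, it yields
\[
  x_n=0 \iff \tildeF(x)_n=0 \qquad \text{for every } x\in B_0,\ n\in\N.
\]
For each finite $S\subset\N$ this biconditional gives $\tildeF(B_0^S)=B_0^S$, so $\tildeF|_{B_0^S}$ is a non-expansive bijection of the compact set $B_0^S$ onto itself, and therefore an isometry (compact metric spaces are plastic).

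Now identify $B_0^S$ with the unit ball of $(\R^S,\|\cdot\|_\infty)$, which has nonempty interior in $\R^S$. Theorem~\ref{Mankiewicz} extends $\tildeF|_{B_0^S}$ to a linear isometry of this finite-dimensional $\ell_\infty$ space, and such isometries are precisely the signed permutations; thus $\tildeF(x)_n=\epsilon_n x_{\sigma(n)}$ for some $\epsilon_n\in\{-1,1\}$ and a permutation $\sigma$ of $S$. Applying Lemma~\ref{c0_step3} at $x=e^{\sigma(n)}\in B_0^S$ gives $1=|\epsilon_n|=|\tildeF(e^{\sigma(n)})_n|\le|e^{\sigma(n)}_n|$, which forces $\sigma(n)=n$; the sign-preservation in item~3) of Lemma~\ref{c0_step1} applied at $x=e^n$ then forces $\epsilon_n=1$. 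Hence $\tildeF$ is the identity on $B_0^S$. Since $B_0^*=\bigcup_{S\ \text{finite}}B_0^S$ is dense in $B_0$ and $\tildeF$ is continuous, $\tildeF=\mathrm{id}_{B_0}$, and therefore $F=G$ is an isometry.

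The crux of the argument is the very first step: without the biconditional, the slices $B_0^S$ would not be $\tildeF^{-1}$-invariant, and the entire finite-dimensional reduction would break down. This is exactly where the continuity hypothesis on $F^{-1}$ enters, through Lemma~\ref{c0_step6}; everything that follows is a clean finite-dimensional argument together with the density of $B_0^*$ in $B_0$.
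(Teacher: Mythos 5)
Your proposal is correct and follows essentially the same route as the paper: continuity of $\tildeF^{-1}$ plus Lemma~\ref{c0_step6} makes each finite-dimensional slice $B_0^S$ invariant in both directions, after which plasticity of compact sets, Theorem~\ref{Mankiewicz}, and the sign constraints of Lemma~\ref{c0_step1} force the identity on $B_0^*$, and density plus continuity finish the argument. The only cosmetic difference is that you invoke the signed-permutation classification of linear isometries of finite-dimensional $\ell_\infty$ and pin down the permutation via Lemma~\ref{c0_step3}, whereas the paper reads off $\tildeF(e^n)=te^n$ with $t>0$ directly from Lemma~\ref{c0_step1} and uses norm preservation.
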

\begin{proof}
    Let $S$ be a finite subset of $\N$. Consider the subset 
    \[B_0^S=\{x\in B_0: \text{$x_n=0$ for all $n\in \N\setminus S$}\}.\]
    Lemma \ref{c0_step1} says that $\tildeF(x)_n=0$ whenever $x_n=0$. This yileds the inclusion $\tildeF(B^S_0)\subset B^S_0$. If $F^{-1}$ is continuous, then $\tildeF^{-1}$ is also continuous, so we can apply Lemma \ref{c0_step6}, which yields the inclusion $\tildeF^{-1}(B^S_0)\subset B^S_0$. Combining these two together, we get that the set $B^S_0$ is mapped bijectively onto itself. It follows that the restriction of $\tildeF$ to $B^S_0$ is a non-expansive bijection from the unit ball of a finite-dimensional space onto itself. Since the unit ball of a finite-dimensional space is plastic, then it follows that the restriction of $\tildeF$ to $B^S_0$ is an isometry. Moreover, Theorem \ref{Mankiewicz} says that the latter is actually a restriction of an isometric automorphism of the underlying finite-dimensional space. Combining this with some previously acquired information, we can conclude that the restriction of $\tildeF$ to $B^S_0$ is an identity map. Indeed, Lemma \ref{c0_step1} says that for each $n\in S$ the element $e^n$ is mapped to $te^n$, where $t>0$. However, for the norm to be preserved, we need to have $t=1$. Therefore, the elements $e^n$, $n\in S$ are mapped to itself. Since every element of $B^S_0$ is a linear combination of these, then the linearity implies that $\tildeF$ should keep all elements of $B^S_0$ in place.
    
    If $\tildeF$ restricted to $B^S_0$ is an identity map, then the restriction of $\tildeF$ to $B^*_0$ is also an identity map, because the latter is the union of all the subsets $B^S_0$, where $S$ is a finite subset of $\N$. Since $\tildeF$ restricted to $B^*_0$ is an identity map, $\tildeF$ is continuous and $B^*_0$ is dense in $B_0$, then it follows that $\tildeF$ is an identity map of $B_0$. This means that $F$ is a restriction of an isometric automorphism of $c_0$ defined by $\calA(x)_{\sigma_n}=\alpha_n x_n$. In particular, $F$ is an isometry.
\end{proof}
\newpage
\section{The space $c$}

Let us consider an arbitrary non-expansive bijection $F\colon B\to B$ from the unit ball of $c$ onto itself. Our goal is to show that $F$ is an isometry. Some parts of the proof will be identical to the corresponding parts of the proof for $c_0$ and we are going to omit these parts. Therefore, it is advisable to take a look at the previous section before reading the proof at hand.

As with the space $c_0$, the first step is to extract some information about $F$ to choose a corresponding isometric automorphism of $c$. The next lemma is an analog of Lemma \ref{c0_step-1}. The only difference is that now we have $B$ instead of $B_0$. The proof is identical to the one of Lemma \ref{c0_step-1}.

\begin{lemma}
    Let $x$ and $y$ be two non-zero elements of $B$. The balls $B(x,1)$ and $B(y,1)$ cover the ball $B$ if and only if there exists an index $n$ such that $x_i=y_i=0$ for all $i\ne n$ and either $x_n$ is positive and $y_n$ is negative or $x_n$ is negative and $y_n$ is positive.
\end{lemma}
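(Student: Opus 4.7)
The plan is to mirror the proof of Lemma \ref{c0_step-1} essentially verbatim, noting that all the auxiliary elements used in the $c_0$ argument happen to lie in $c_0 \subset c$, so they are legitimate members of $B$.

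For the easier direction, I would assume that some index $n$ is as described, say with $x_n > 0$ and $y_n < 0$, and take any $z \in B$. Splitting on the sign of $z_n$, I get that $z \in B(x,1)$ when $z_n \geq 0$ (since $|z_n - x_n| \leq 1$ because both lie in $[0,1]$ or $[-1,1]$ appropriately, and $|z_i - x_i| = |z_i| \leq 1$ for $i \neq n$) and $z \in B(y,1)$ when $z_n \leq 0$. The opposite sign case is symmetric.

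For the harder direction, I would suppose that $B \subseteq B(x,1) \cup B(y,1)$ and use the same three obstructions as in Lemma \ref{c0_step-1}. Pick $n$ with $x_n \neq 0$; if some $i \neq n$ had $y_i \neq 0$, then the sequence $z = -\sgn(x_n)e^n - \sgn(y_i)e^i$ lies in $c_0 \subset c$, hence in $B$, and satisfies $|z_n - x_n| = 1 + |x_n| > 1$ and $|z_i - y_i| = 1 + |y_i| > 1$, contradicting the cover. Therefore $y_i = 0$ for every $i \neq n$, which forces $y_n \neq 0$ since $y$ is non-zero. Swapping the roles of $x$ and $y$ and repeating the same argument gives $x_i = 0$ for $i \neq n$. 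Finally, if $x_n$ and $y_n$ had the same sign, then whichever of $e^n$ or $-e^n$ is opposite to that common sign lies in $B$ but outside both balls, again a contradiction.

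No step here is a real obstacle, since every witness sequence ($e^n$, $-e^n$, and $-\sgn(x_n)e^n - \sgn(y_i)e^i$) is eventually zero and thus belongs to $B$ regardless of whether we work in $c_0$ or in $c$. The only thing to double-check is that the elementary inequality $|z_n - x_n| \leq 1$ for $z_n, x_n \in [-1,1]$ of the same (weak) sign remains correct, which it does. Accordingly, I would simply state that the proof is identical to that of Lemma \ref{c0_step-1} and omit the details, as the author suggests.
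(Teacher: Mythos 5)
Your proposal is correct and is essentially the paper's own proof: the author simply states that the argument of Lemma \ref{c0_step-1} carries over verbatim, and your observation that every witness sequence is eventually zero (hence lies in $c_0\subset c$ and thus in $B$) is exactly the point that makes this transfer legitimate.
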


Now we can retrieve some information about $F$ to fix an isometric automorphism of $c$ that the function $F$ seems to resemble. The next lemma is an analog of Lemma \ref{c0_step0}. The difference from the space $c_0$ is that now we have to ensure that the sequence $\alpha$ is constant from some point.

\begin{lemma}\label{c_step0}
    There exists a bijection $\sigma\colon\mathbb{N}\to\mathbb{N}$ and a sequence $\alpha\colon\mathbb{N}\to\{-1,1\}$, which is constant starting from some index, such that for every $x\in B$ and $n\in\mathbb{N}$ we have the following:
    \begin{enumerate}[1)]
        \item if $x_n=0$, then $F(x)_{\sigma_n}=0$;
        \item if $x_n<0$ and $\alpha_n=1$ ($\alpha_n=-1$), then $F(x)_{\sigma_n}\leq0$ ($F(x)_{\sigma_n}\geq0$);
        \item if $x_n>0$ and $\alpha_n=1$ ($\alpha_n=-1$), then $F(x)_{\sigma_n}\geq0$ ($F(x)_{\sigma_n}\leq0$).
    \end{enumerate}
\end{lemma}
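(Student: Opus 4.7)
The plan is to mimic the proof of Lemma \ref{c0_step0} to construct $\sigma$ and $\alpha$ with properties 1)--3) on $B_0$, then use the extreme point $\mathbf{1} := (1,1,\ldots) \in \ext B$ to force $\alpha$ to be eventually constant, and finally to extend the properties from $B_0$ to all of $B$. The first two tasks are direct adaptations of the $c_0$-proof; the extension in the third task is the main new obstacle.

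For the construction, I note that for each $n$, the cover $B \subset B(e^n,1) \cup B(-e^n,1)$ holds verbatim (if $z \in B$ has $z_n \geq 0$ then $z \in B(e^n,1)$, and if $z_n \leq 0$ then $z \in B(-e^n,1)$). The preceding lemma together with Theorem \ref{BnEproperties} fixes a common support index $\sigma_n$ for $F(\pm e^n)$ with opposite signs, which defines $\alpha_n$. For $x \in B_0^*$ with $S = \{n : x_n \neq 0\}$ finite, the cover $B \subset B(x,1) \cup \bigcup_{n \in S} B(s^n,1)$ also holds: the only coordinate check beyond the $c_0$-argument is for $n \notin S$, where $|z_n - x_n| = |z_n| \leq 1$ is automatic for every $z \in B$. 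The witness-sequence construction in the image space then yields properties 1)--3) on $B_0^*$, and continuity of $F$ with density of $B_0^*$ in $B_0$ extends them to $B_0$. The injectivity of $\sigma$ is obtained exactly as before.

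To force $\alpha$ to be eventually constant, let $y := F^{-1}(\mathbf{1})$. Theorem \ref{BnEproperties}(3) gives $y \in \ext B$, so $y$ is $\pm 1$-valued and eventually constant. By construction $F(e^n) = c_n \alpha_n e^{\sigma_n}$ for some $c_n \in (0,1]$, and $F(y) = \mathbf{1}$. A direct calculation shows $\|F(y) - F(e^n)\|_\infty$ equals $1$ when $\alpha_n = 1$ and $1 + c_n > 1$ when $\alpha_n = -1$, whereas $\|y - e^n\|_\infty$ equals $1$ if $y_n = 1$ and $2$ if $y_n = -1$. Non-expansiveness of $F$ therefore forces $\alpha_n = 1$ whenever $y_n = 1$. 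A symmetric comparison using $-e^n$ forces $\alpha_n = -1$ whenever $y_n = -1$. Hence $\alpha = y$, and $\alpha$ inherits the eventual constancy of $y$.

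The hard part will be to extend properties 1)--3) from $B_0$ to all of $B$, because $B_0$ is not dense in $B$ and the Lemma \ref{c0_step0}-style covering no longer produces a finite cover for $x \in B_h^*$ with $h \neq 0$. My plan is to exploit the now-established eventual constancy of $\alpha$: for $x \in B_h^*$, all but finitely many coordinates of $x$ equal $h$, and on this cofinite tail the sign pattern prescribed by $\alpha$ is constant. This should allow one to augment $B(x,1)$ and the finite family $\{B(s^n,1) : n \in S_x\}$ with a finite collection of balls centered at suitable reflected tails (for example, the sequence equal to $-h$ on $\N \setminus S_x$ and zero on $S_x$), producing a finite cover of $B$. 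A correspondingly adapted witness sequence in the image space then recovers properties 1)--3) for $x \in B_h^*$. Once the properties hold on the dense set $B^* := \bigcup_{h \in [-1,1]} B_h^*$, continuity of $F$ extends them to $B$, and the surjectivity of $\sigma$ follows as in Lemma \ref{c0_step0} by contradicting surjectivity of $F$.
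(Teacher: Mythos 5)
Your construction of $\sigma$ and $\alpha$, and your argument for the eventual constancy of $\alpha$, match the paper's: the paper also compares distances involving an extreme point of $B$ and its $F$-preimage (it takes an arbitrary extreme point $y$ in the image and shows $y_{\sigma_n}=\alpha_n F^{-1}(y)_{n}$; your specialization to $(1,1,1,\dots)$ is a correct instance of the same computation). The genuine gap is in the step you yourself flag as the hard one: extending items 1)--3) beyond $B_0$. Your plan --- replace the infinite tail of coordinate balls by finitely many balls centered at ``reflected tails'' so as to obtain a \emph{finite} cover of $B$ --- fails for two reasons. First, the suggested cover is not a cover: take $h=1/2$ and $x=(h,h,h,\dots)$, so that the exceptional set is empty and the reflected tail is $w=(-h,-h,\dots)$; the point $z=-e^1+e^2\in B$ satisfies $|z_1-x_1|=3/2>1$ and $|z_2-w_2|=3/2>1$, so $z$ lies in neither $B(x,1)$ nor $B(w,1)$, and there are no balls $B(s^n,1)$ left to catch it. Second, and more fundamentally, even a correct finite cover with such centers would break the witness argument: the reason centers of the form $te^m$ are usable is that the first part of the proof identifies $F(te^m)$ as supported on the single coordinate $\sigma_m$, which is exactly what lets you build a $y$ that provably escapes every image ball $B\big(F(s^n),1\big)$. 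About $F$ of a reflected tail you know nothing at this stage, so you cannot arrange $y\notin B\big(F(w),1\big)$. (One can also check that no finite cover of $B$ by $B(x,1)$ together with balls centered on single coordinates exists when $\lim x_n\ne0$, so the finiteness requirement itself is the wrong target.)

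The paper resolves this differently and more cheaply: it does \emph{not} make the cover finite. For $x$ with $\lim x_n\ne 0$ (it uses the dense set $B\setminus B_0$ rather than $\bigcup_h B_h^*$), it keeps the full, now infinite, family $B\big({-\sgn(x_n)}e^n,1\big)$, $n\in S=\{n\colon x_n\ne 0\}$, together with $B(x,1)$; the covering and its push-forward under a non-expansive surjection work for infinite families just as well as for finite ones. The eventual constancy of $\alpha$, combined with the eventual constancy of $\sgn(x_n)$ forced by $\lim x_n\ne 0$, is used only to verify that the single witness sequence $y$ (equal to $\sgn(x_n)\alpha_n$ at $\sigma_n$ for $n\in S$ and to the appropriate constant elsewhere) is eventually constant and hence actually lies in $B$. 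If you redirect your use of the eventual constancy of $\alpha$ from shrinking the cover to checking that the witness converges, your outline becomes the paper's proof.
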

\begin{proof}
    The first part of the proof repeats the first three paragraphs of the proof of Lemma \ref{c0_step0}. We only have to replace $B_0$ by $B$. We fix a function $\sigma\colon\N\to\N$ and a sequence $\alpha\colon\N\to\{-1,1\}$. We show that for each $n\in\N$ there exists a continuous function $f_n\colon[-1,1]\to[-1,1]$ such that $F(t e^n)=f_n(t)e^{\sigma_n}$ for each $t\in[-1,1]$. We also know that if $\alpha_n=1$, then $f_n$ is strictly increasing, $f_n(-1)<0$, $f_n(0)=0$ and $f_n(1)>0$, and if $\alpha_n=-1$, then $f_n$ is strictly decreasing, $f_n(-1)>0$, $f_n(0)=0$ and $f_n(1)<0$. We also show that the function $\sigma$ is injective.
    
    The next step is to ensure that the sequence $\alpha$ is constant from some point. Let $y$ be an arbitrary extreme point of $B$. The sequence $y$ consists of ones and minus ones and is constant from some point. Denote $F^{-1}(y)$ by $x$. By item 3) of Theorem \ref{BnEproperties} we know that $x$ is also an extreme point. Therefore, the sequence $x$ consists of ones and minus ones and is constant from some point. Let us show that for each $n\in\N$ we have $y_{\sigma_n}=\alpha_n x_n$. Let $n\in\N$ be arbitrary. We know that $x_n$ is either $1$ or $-1$. Let us consider the case $x_n=1$. We need to show that $y_{\sigma_n}=\alpha_n$. Consider elements $e^n$ and $x$. The distance between $e^n$ and $x$ is equal to one. It follows that the distance between $F(e^n)$ and $y$ should be at most one. We know that $F(e^n)$ is $te^{\sigma_n}$, where $t>0$ if $\alpha_n=1$ and $t<0$ if $\alpha_n=-1$. We know that $y_{\sigma_n}$ is either $1$ or $-1$. If $y_{\sigma_n}\ne\alpha_n$, then $|F(e^n)_{\sigma_n}-y_{\sigma_n}|>1$, which implies that the distance between $F(e^n)$ and $y$ is greater than one, but this can not be the case. Therefore, we must have $y_{\sigma_n}=\alpha_n$. The case $x_n=-1$ is analogous. Now, the fact that $y_{\sigma_n}=\alpha_n x_n$ for each $n\in\N$ and the fact that the sequences $x$ and $y$ are constant from some point imply that the sequence $\alpha$ should be also constant from some point.
    
    Now we want to show that for each $x\in B$ and $n\in\N$ the items 1)--3) are true. As $F$ is continuous and $B\setminus B_0$ is dense in $B$, then it suffices to consider the case where $\lim x_n\ne 0$. Note that we can not apply the approach used in the proof for $c_0$, because the subset $B^*_0$ is not dense in $B$. Denote by $S$ the set $\{n\in\N\colon x_n\ne 0\}$. For every $n\in S$ let $s^n=-\sgn(x_n)e^n$. Note that $B$ is covered by the ball $B(x,1)$ together with the balls $B(s^n,1)$, $n\in S$. It follows that $B$ should be also covered by the balls $B\big(F(x),1\big)$ and $B\big(F(s^n),1\big)$, $n\in S$.
    
    As with the case of $c_0$, the next step is to find an element $y\in B$, which does not belong to any of the balls $B\big(F(s^n),1\big)$, $n\in S$. The difficult part is to ensure that the sequence $y$ belongs to $B$. Let us construct a sequence $y$ as follows:
    
    \begin{itemize}
        \item for $k\not\in \sigma(S)$ let $y_k=\lim(\sgn(x_n)\alpha_n)$;
        \item for $n\in S$ let $y_{\sigma_n}=\sgn(x_n)\alpha_n$.
    \end{itemize}
    
    Let us ensure that the sequence $y$ belongs to $B$. Since $\lim x_n\ne 0$, then either $\lim x_n> 0$ or $\lim x_n < 0$. If $\lim x_n > 0$, then there exists an index $N$ such that $\sgn(x_n)=1$ for each $n\geq N$.
    If $\lim x_n < 0$, then there exists an index $N$ such that $\sgn(x_n)=-1$ for each $n\geq N$. Either way, the sequence $(\sgn x_n)$ is constant starting from index $N$. The sequence $(\alpha_n)$ is also constant from some point, as shown above. This means that the sequence $(\sgn(x_n)\alpha_n)$ is also constant from some point, so the limit $\lim(\sgn(x_n)\alpha_n)$ exists and is equal to $1$ or $-1$. Now we see that the sequence $y$ consists of ones and minus ones and is constant from some point. Therefore, $y$ belongs to $B$.
    
    Note that $y$ does not belong to any of the balls $B\big(F(s^n),1\big)$, $n\in S$. On the other hand, $y$ is an element of $B$ and $B$ should be covered by the balls $B\big(F(x),1\big)$ and $B\big(F(s^n),1\big)$, $n\in S$. It follows that $y$ should belong to $B\big(F(x),1\big)$. This implies items 2) and 3). The item 1) can be proved by the same argument as in the proof for $c_0$.
    
    Continuity of $F$ and the fact that $B\setminus B_0$ is dense in $B$ allow us to extend the result to all elements of $B$. Finally, we have to make sure $\sigma$ is a surjection. This can be proved by the same argument as in the proof for $c_0$.
\end{proof}

The previous lemma fixes a bijection $\sigma\colon\mathbb{N}\to\mathbb{N}$ and a sequence $\alpha\colon\mathbb{N}\to\{-1,1\}$, that is constant from some point. Let $\calA$ be the corresponding isometric automorphism of $c$ defined by $\calA(x)_{\sigma_n}=\alpha_n x_n$. Define $\tildeF$ as in the proof for $c_0$. Our goal is to show that $\tildeF$ is an identity map. The previous lemma implies the following properties of $\tildeF$.

\begin{lemma}\label{c_step1}
    For each $x\in B$ and $n\in\N$,
    \begin{itemize}
        \item if $x_n=0$, then $\tildeF(x)_n=0$;
        \item if $x_n<0$, then $\tildeF(x)_n\leq0$;
        \item if $x_n>0$, then $\tildeF(x)_n\geq0$.
    \end{itemize}
\end{lemma}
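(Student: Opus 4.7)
The plan is that this lemma is a direct translation of Lemma \ref{c_step0} across the isometry $G^{-1}$, and it is the exact analog of how Lemma \ref{c0_step1} is derived from Lemma \ref{c0_step0} in the $c_0$ case. So the whole proof is a routine unwinding of definitions together with a case split on the sign of $\alpha_n$.

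First, I would compute the coordinates of $\tildeF(x)$ explicitly. Since $\calA(y)_{\sigma_n}=\alpha_n y_n$ and $\alpha_n\in\{-1,1\}$, the inverse automorphism satisfies $\calA^{-1}(z)_n=\alpha_n z_{\sigma_n}$. Hence, from $\tildeF=G^{-1}\circ F=\calA^{-1}\circ F$ restricted to $B$, we get the key identity
\[
\tildeF(x)_n=\alpha_n F(x)_{\sigma_n}
\]
for every $x\in B$ and $n\in\N$. Everything else reduces to reading off signs.

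Next, I would verify the three bullet points in order. For the first, if $x_n=0$, then item 1) of Lemma \ref{c_step0} gives $F(x)_{\sigma_n}=0$, so $\tildeF(x)_n=\alpha_n\cdot 0=0$. For the second, assume $x_n<0$: if $\alpha_n=1$, item 2) of Lemma \ref{c_step0} yields $F(x)_{\sigma_n}\leq 0$, hence $\tildeF(x)_n=F(x)_{\sigma_n}\leq 0$; if $\alpha_n=-1$, the same item gives $F(x)_{\sigma_n}\geq 0$, and therefore $\tildeF(x)_n=-F(x)_{\sigma_n}\leq 0$. The third bullet is completely symmetric, using item 3) of Lemma \ref{c_step0} with the same case split on $\alpha_n$.

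There is essentially no obstacle here; the content of the lemma is entirely contained in Lemma \ref{c_step0}, and the role of composing with $G^{-1}$ is precisely to absorb the permutation $\sigma$ and the signs $\alpha_n$, so that the sign-preservation statement for $F$ becomes a coordinate-preservation statement for $\tildeF$. The only thing that needs care is remembering that multiplication by $\alpha_n=-1$ reverses inequalities, which is why each of the last two bullets splits into two subcases that both yield the desired inequality.
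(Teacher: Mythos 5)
Your proof is correct and is exactly the argument the paper leaves implicit: the paper states Lemma \ref{c_step1} as an immediate consequence of Lemma \ref{c_step0}, and your computation $\tildeF(x)_n=\alpha_n F(x)_{\sigma_n}$ together with the case split on $\alpha_n$ is the routine unwinding that justifies it.
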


The three properties listed above imply the following properties of $\tildeF^{-1}$.

\begin{lemma}\label{c_step1a}
    For each $y\in B_0$ and $n\in\N$, if $y_n<0$, then $\tildeF^{-1}(y)_n<0$, and if $y_n>0$, then $\tildeF^{-1}(y)_n>0$.
\end{lemma}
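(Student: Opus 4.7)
The plan is to deduce this statement as the direct contrapositive of Lemma \ref{c_step1}, using only the fact that $\tildeF$ is a bijection of $B$ onto $B$. I would set $x = \tildeF^{-1}(y)$, so that $x \in B$ and $\tildeF(x) = y$, and then observe that for each fixed index $n$ the real number $x_n$ satisfies exactly one of the three alternatives $x_n < 0$, $x_n = 0$, $x_n > 0$. Proving the lemma then reduces to ruling out two of these three alternatives in each of the hypotheses $y_n > 0$ and $y_n < 0$.

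Concretely, for $y_n > 0$ I would exclude $x_n = 0$ using item~1) of Lemma \ref{c_step1} (which would force $y_n = \tildeF(x)_n = 0$) and exclude $x_n < 0$ using item~2) (which would force $y_n \le 0$), leaving only $x_n > 0$. The case $y_n < 0$ is entirely symmetric and uses items~1) and~3) in the same way. I do not anticipate any genuine obstacle here: the lemma is a purely formal consequence of Lemma \ref{c_step1} together with the surjectivity of $\tildeF$, and is the exact analogue of Lemma \ref{c0_step1a} deduced by the same two-line sign-chasing from Lemma \ref{c0_step1}. The substantive adaptations from the $c_0$-proof to the $c$-proof are absorbed elsewhere, chiefly in Lemma \ref{c_step0}, where the additional requirement that $\alpha$ be eventually constant and the failure of $B^*_0$ to be dense in $B$ had to be handled.
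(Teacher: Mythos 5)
Your proposal is correct and matches the paper's (implicit) argument: the paper states Lemma~\ref{c_step1a} without proof as a direct consequence of Lemma~\ref{c_step1}, and the trichotomy-plus-contrapositive sign-chasing you describe, applied to $x=\tildeF^{-1}(y)$, is exactly the intended two-line deduction. No gaps.
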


In the proof for $c_0$, the next step was to prove Lemmas \ref{c0_step2} and \ref{c0_step3}. The proofs of these propositions work for the space $c$ as well. We only need to substitute $B$ for $B_0$. Therefore, we obtain the following.

\begin{lemma}\label{c_step2}
    For each $y\in B$ and $n\in\N$, if $y_n=1$ and $y_i\ne 0$ for all $i\ne n$, then $\tildeF^{-1}(y)_n=1$, and if $y_n=-1$ and $y_i\ne 0$ for all $i\ne n$, then $\tildeF^{-1}(y)_n=-1$.
\end{lemma}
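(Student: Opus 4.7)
My plan is to follow the proof of Lemma \ref{c0_step2} almost verbatim, since its only inputs are non-expansiveness of $\tildeF$, bijectivity, and the sign-preservation property of $\tildeF^{-1}$, and all of these remain available in the setting of $c$ via Lemma \ref{c_step1a}. Fix $y \in B$ with $y_n = 1$ and $y_i \neq 0$ for every $i \neq n$, and define $z \in B$ by $z_n = -1$ and $z_i = y_i$ for $i \neq n$; this $z$ lies in $B$ because changing a single coordinate preserves convergence and keeps all entries in $[-1,1]$. By construction $y - z = 2 e^n$, so $\|y - z\| = 2$.

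Write $y' = \tildeF^{-1}(y)$ and $z' = \tildeF^{-1}(z)$. Non-expansiveness of $\tildeF$ gives $2 = \|\tildeF(y') - \tildeF(z')\| \leq \|y' - z'\|$, and since $y', z' \in B$ the reverse inequality also holds, hence $\|y' - z'\| = 2$. For each $i \neq n$ the coordinate $y_i = z_i$ is nonzero, so Lemma \ref{c_step1a} applied to both $y$ and $z$ forces $y'_i$ and $z'_i$ to share the (strict) sign of $y_i$. Two numbers lying simultaneously in $(0,1]$, or both in $[-1,0)$, necessarily differ by strictly less than $1$, so $|y'_i - z'_i| < 1$ for every $i \neq n$. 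Therefore $\sup_{i \neq n} |y'_i - z'_i| \leq 1$, and the identity $\|y' - z'\| = \sup_{i \in \N} |y'_i - z'_i| = 2$ can hold only if $|y'_n - z'_n| = 2$, i.e.\ $\{y'_n, z'_n\} = \{-1, 1\}$. Finally, Lemma \ref{c_step1a} applied to $z$ (with $z_n = -1$) gives $z'_n < 0$, so $z'_n = -1$ and $y'_n = 1$, as required. The case $y_n = -1$ is symmetric.

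The only thing that needs verification is that no step in the $c_0$-argument silently uses a feature of $c_0$ that fails in $c$ (for example, norm attainment or density of finitely-supported elements). Inspection shows that the argument uses only the sup-norm formula $\|v\| = \sup_i |v_i|$, the trivial bound $|a - b| < 1$ for two same-signed numbers in $[-1,1]$, and Lemma \ref{c_step1a}; none of these relies on any structural property specific to $c_0$. Accordingly, no genuine obstacle is expected, which matches the author's remark that Lemmas \ref{c0_step2} and \ref{c0_step3} transfer to $c$ simply by substituting $B$ for $B_0$.
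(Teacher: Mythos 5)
Your proof is correct and is essentially the paper's own argument: the paper explicitly states that the proof of Lemma \ref{c0_step2} carries over to $c$ by substituting $B$ for $B_0$, which is exactly what you do (using Lemma \ref{c_step1a} in place of Lemma \ref{c0_step1a}). Your extra check that no $c_0$-specific feature is used is sound, so nothing further is needed.
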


\begin{lemma}\label{c_step3}
    For each $x\in B$ and $n\in\mathbb{N}$, if $x_n<0$, then $\tildeF(x)_n\in[x_n,0]$, and if $x_n>0$, then $\tildeF(x)_n\in[0,x_n]$.
\end{lemma}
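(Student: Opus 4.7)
The plan is to transplant the proof of Lemma \ref{c0_step3} to the space $c$. By symmetry it is enough to handle the case $x_n>0$; Lemma \ref{c_step1} already gives $\tildeF(x)_n\geq 0$, so the work lies in proving $\tildeF(x)_n\leq x_n$. The strategy is to produce an auxiliary sequence $y\in B$ whose inverse image $y'=\tildeF^{-1}(y)$ is forced, at coordinate $n$, to equal $-1$, while at every other coordinate $y'_i$ is pinned to the same open half-line as $x_i$. Then $|x_n-y'_n|=1+x_n$ will be the only coordinate that can exceed the other differences, and non-expansiveness will propagate this bound to $|\tildeF(x)_n-y_n|\leq 1+x_n$, yielding the estimate.

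To construct $y$, I would set $y_n=-1$ and, for $i\neq n$, pick $y_i\in[-1,0)$ if $x_i<0$, $y_i\in(0,1]$ if $x_i>0$, and any nonzero value in $[-1,1]$ if $x_i=0$. The only wrinkle compared to the $c_0$ proof is ensuring $y\in c$ rather than merely $y\in\ell_\infty$. But the concrete choice used for $c_0$, namely $y_i=1/i$ when $x_i\geq 0$ and $y_i=-1/i$ when $x_i<0$ (for $i\neq n$), gives a sequence in $c_0\subset c$ of norm at most $1$, so the same construction works. In particular $y_i\neq 0$ for all $i\neq n$.

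With $y$ in hand, Lemma \ref{c_step2} applies (since $y_n=-1$ and all other $y_i$ are nonzero) to give $y'_n=-1$. For $i\neq n$ with $x_i\neq 0$, Lemma \ref{c_step1a} forces $y'_i$ to have the same sign as $y_i$, i.e.\ the same sign as $x_i$; hence $x_i$ and $y'_i$ lie together in either $[-1,0)$ or $(0,1]$, so $|x_i-y'_i|<1$. When $x_i=0$ the bound $|x_i-y'_i|\leq 1$ is automatic. Therefore $d(x,y')=|x_n-y'_n|=1+x_n$, and non-expansiveness of $\tildeF$ gives $d(\tildeF(x),y)\leq 1+x_n$, so $|\tildeF(x)_n-(-1)|\leq 1+x_n$. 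Combined with $\tildeF(x)_n\geq 0$ this yields $\tildeF(x)_n\leq x_n$. The case $x_n<0$ is completely symmetric, using $y_n=1$.

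I do not anticipate a genuine obstacle: since the auxiliary sequence chosen for the $c_0$ argument already belongs to $c_0\subset c$ and the distance estimate is achieved in $\ell_\infty$-norm regardless of whether the limit is zero, everything ports over unchanged. The only subtlety worth flagging is that the sup defining $d(x,y')$ need not be attained in $c$, but this is immaterial because we only need an upper bound via the coordinatewise estimates.
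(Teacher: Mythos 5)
Your proposal is correct and is essentially the paper's own argument: the paper simply notes that the proof of Lemma~\ref{c0_step3} carries over verbatim with $B$ in place of $B_0$, and your verification that the auxiliary sequence $y$ (which lies in $c_0\subset c$) still works, together with the appeals to Lemmas~\ref{c_step1}, \ref{c_step1a} and \ref{c_step2}, is exactly that transplantation. No gaps.
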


As for the case of $c_0$, the previous lemma implies the following properties of the inverse function.

\begin{lemma}\label{c_step3a}
    For each $y\in B_0$ and $n\in\N$, if $y_n<0$, then $\tildeF^{-1}(y)_n\in[-1,y_n]$, and if $y_n>0$, then $\tildeF^{-1}(y)_n\in[y_n,1]$.
\end{lemma}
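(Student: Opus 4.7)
The plan is to derive this bound on $\tildeF^{-1}$ by combining Lemma \ref{c_step1a} (which controls the sign of $\tildeF^{-1}(y)_n$) with Lemma \ref{c_step3} (which bounds the magnitude of $\tildeF(x)_n$), in exactly the way Lemma \ref{c0_step3a} was obtained from Lemmas \ref{c0_step1a} and \ref{c0_step3} in the previous section. The whole reason Lemma \ref{c_step3} was phrased as a statement about $\tildeF$ on signed coordinates is to make the corresponding statement about $\tildeF^{-1}$ a one-line deduction.

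Concretely, take $y$ in the domain with $y_n>0$ and set $x=\tildeF^{-1}(y)$. Lemma \ref{c_step1a} applied to $y$ at index $n$ gives $x_n>0$. Lemma \ref{c_step3} then applies to $x$ at index $n$ and yields $\tildeF(x)_n\in[0,x_n]$, i.e.\ $y_n\in[0,x_n]$, hence $x_n\ge y_n$. Since $x\in B$ we also have $x_n\le 1$, so $\tildeF^{-1}(y)_n=x_n\in[y_n,1]$. The case $y_n<0$ is symmetric: Lemma \ref{c_step1a} gives $x_n<0$, Lemma \ref{c_step3} gives $y_n=\tildeF(x)_n\in[x_n,0]$ so $x_n\le y_n$, and $x_n\ge -1$ yields $\tildeF^{-1}(y)_n\in[-1,y_n]$.

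There is no real obstacle here; the only thing worth checking is that Lemma \ref{c_step1a} is available in the required generality. Its proof parallels that of Lemma \ref{c0_step1a} and relies only on the sign properties of $\tildeF$ recorded in Lemma \ref{c_step1}, which hold for every $x\in B$. So no substantive work is needed beyond invoking the two preceding lemmas in the right order.
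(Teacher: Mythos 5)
Your proposal is correct and is exactly the intended argument: the paper states this lemma without proof as an immediate consequence of Lemma \ref{c_step3} (together with the sign information of Lemma \ref{c_step1a}), and your one-line deduction is precisely that. No issues.
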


In the case of $c_0$, the next step was Lemma \ref{c0_step4}. The proof of this lemma relies on the fact that every element of $c_0$ attains its norm, but this is not true in $c$. Therefore, we are forced to use some alternative approach.

\begin{lemma}\label{c_step4}
    Let $x\in B$ be such that the set $S=\{i\in\N\colon x_i\not\in\{-1,1\}\}$ is finite. Then $\tildeF(x)=x$.
\end{lemma}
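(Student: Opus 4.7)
The plan is to identify the finite-dimensional face of $B$ containing $x$ and show that $\tildeF$ restricts to the identity on it. Put $h = \lim_n x_n$, which lies in $\{-1,1\}$ because $S$ is finite, and define
\[
    C := \{y \in B \colon y_i = x_i \text{ for all } i \in \N\setminus S\}.
\]
Then $x \in C$, and the parametrization $(y_i)_{i\in S}$ identifies $C$ isometrically with the unit ball of $\ell_\infty^{|S|}$; in particular $C$ is compact. Lemma \ref{c_step3a} immediately yields $\tildeF^{-1}(C) \subseteq C$: for $y \in C$ and $i \notin S$ one has $|y_i|=1$, so the interval $[y_i,1]$ or $[-1,y_i]$ collapses to $\{y_i\}$, forcing $\tildeF^{-1}(y)_i = y_i = x_i$.

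The main obstacle is to promote this to the full equality $\tildeF^{-1}(C) = C$, since $\tildeF^{-1}$ is not known to be nonexpansive and the approximation-and-continuity argument used in the $c_0$ case is unavailable. Setting $g := \tildeF^{-1}|_C \colon C \to C$ and applying the nonexpansiveness of $\tildeF$ to the pair $g(y), g(y')$ for $y, y' \in C$ gives
\[
    d(y,y') = d\bigl(\tildeF(g(y)),\tildeF(g(y'))\bigr) \leq d(g(y),g(y')),
\]
so $g$ is an expansion of the compact space $C$ into itself. By the classical theorem that any such map is a surjective isometry (proved by iterating $g$ and using compactness to extract $m_k\to\infty$ with $g^{m_k}(y)\to y$ and $g^{m_k}(y')\to y'$, then comparing $d(g^{m_k}(y),g^{m_k}(y'))$ with $d(g(y),g(y'))$ via expansion), $g$ is a bijective isometry of $C$. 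Consequently $\tildeF(C) = C$ and $\tildeF|_C$ is itself an isometry.

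To finish, Theorem \ref{Mankiewicz} extends the isometric bijection $\tildeF|_C$ of $C$ (identified with the unit ball of $\ell_\infty^{|S|}$) to an affine isometry $\widetilde{G}$ of $\ell_\infty^{|S|}$. Every extreme point $e$ of $C$ consists of $\pm 1$'s and is eventually equal to $h$, hence an extreme point of $B$, and Lemma \ref{c_step2} applied coordinate by coordinate gives $\tildeF^{-1}(e) = e$, so $\tildeF(e) = e$. Since the $2^{|S|}$ extreme points of $C$ contain an affine basis of $\ell_\infty^{|S|}$, the affine isometry $\widetilde{G}$ must be the identity; hence $\tildeF|_C$ is the identity on $C$ and in particular $\tildeF(x) = x$.
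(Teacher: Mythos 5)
Your proof is correct, but it takes a genuinely different route from the paper. The paper proves this lemma by induction on $|S|$: for each $n\in S$ it builds two sequences $y,z$ that agree with $\tildeF^{-1}(x)$ off the index $n$ and equal $1$ and $-1$ at $n$, applies the induction hypothesis to fix $y$ and $z$, and squeezes $\tildeF^{-1}(x)_n$ between the two resulting distance constraints. You instead treat the whole face $C$ at once: Lemma \ref{c_step3a} gives $\tildeF^{-1}(C)\subseteq C$ (note the forward inclusion $\tildeF(C)\subseteq C$ is \emph{not} immediate from Lemma \ref{c_step3}, so working with the inverse is the right move), the classical fact that an expansive self-map of a compact metric space is a surjective isometry upgrades this to $\tildeF|_C$ being an isometric bijection of $C$, and Mankiewicz plus the fixed vertices (pinned down by Lemma \ref{c_step2}, or equally by Lemma \ref{c_step3a}) forces the identity. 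This is essentially the same strategy the paper itself deploys later in Theorem \ref{c_main_result} for the faces $B_h^S$ with $h\in(-1,1)$, transplanted to the level-$\pm 1$ faces; the paper's inductive argument is more elementary and self-contained, while yours is more conceptual, avoids the induction entirely, and makes the structural role of the compact face visible. Two trivial points you leave implicit: the degenerate case $S=\emptyset$ is already settled by your first inclusion, and Lemma \ref{c_step3a} must be read with $y\in B$ (its statement in the paper says $B_0$, an evident typo, and the paper's own base case uses it the same way).
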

\begin{proof}
    Let us proceed by induction on the number of elements of $S$. For the base of induction, consider the case where the set $S$ is empty. If the set $S$ is empty, then $x_i\in\{-1,1\}$ for every $i\in\N$ and the application of Lemma \ref{c_step3a} gives $\tildeF^{-1}(x)=x$, which is equivalent to $\tildeF(x)=x$. This proves the base of induction.
    
    Now, let $N$ be an arbitrary non-negative integer. Suppose that the claim holds whenever the set $S$ has at most $N$ elements. Let us prove that the claim also holds when the set $S$ has $N+1$ elements. Suppose that the set $S$ has $N+1$ elements. We need to show $\tildeF(x)=x$, which is equivalent to showing $\tildeF^{-1}(x)=x$. Applying Lemma \ref{c_step3a}, we obtain that $\tildeF^{-1}(x)_n=x_n$ for every $n\not\in S$. It remains to show that $\tildeF^{-1}(x)_n=x_n$ is true for every $n\in S$. Let $n$ be an arbitrary element of $S$. Consider sequences $y$ and $z$ defined by $y_n=1$, $z_n=-1$ and $y_i=z_i=\tildeF^{-1}(x)_i$ for every $i\ne n$. Note that the sets $S_y=\{i\in\N\colon y_i\not\in\{-1,1\}\}$ and $S_z=\{i\in\N\colon z_i\not\in\{-1,1\}\}$ are contained in the set $S\setminus\{n\}$. This implies that the sets $S_y$ and $S_z$ have at most $N$ elements. Therefore, we can apply the induction hypothesis to obtain $\tildeF(y)=y$ and $\tildeF(z)=z$. Since the sequences $y$, $z$ and $\tildeF^{-1}(x)$ coincide for all indices distinct from $n$, then the distance between elements $\tildeF^{-1}(x)$ and $y$ is equal to $|\tildeF^{-1}(x)_n-y_n|$ and the distance between elements $\tildeF^{-1}(x)$ and $z$ is equal to $|\tildeF^{-1}(x)_n-z_n|$. It follows that the distance between elements $x$ and $y$ is at most $|\tildeF^{-1}(x)_n-y_n|$ and the distance between elements $x$ and $z$ is at most $|\tildeF^{-1}(x)_n-z_n|$. Combining these two facts, we obtain $\tildeF^{-1}(x)_n=x_n$.
\end{proof}

From the last lemma it follows that $\tildeF$ is an identity map on $B^*_1\cup B^*_{-1}$. Since $\tildeF$ is continuous and $B^*_1\cup B^*_{-1}$ is dense in $B_1\cup B_{-1}$, then $\tildeF$ is also an identity map on $B_1\cup B_{-1}$. It turns out that we can say something about other levels too. 

\begin{lemma}\label{c_step5}
    Let $x\in B$, $h=\lim x_k$ and $n\in\N$.
    \begin{enumerate}[1)]
        \item If $|x_n|<|h|$, then $\tildeF(x)_n=x_n$.
        \item If $x_n\geq |h|$, then $\tildeF(x)_n\in[|h|,x_n]$.
        \item If $x_n\leq -|h|$, then $\tildeF(x)_n\in[x_n,-|h|]$.
    \end{enumerate}
\end{lemma}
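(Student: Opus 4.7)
Plan: The case $h=0$ is immediate from Lemma \ref{c_step3}: item 1 is vacuous and items 2--3 are exactly that lemma. The case $h<0$ is mirror-symmetric to $h>0$, so I will focus on $h>0$. The essential new input is that $\tildeF$ fixes every element of $B_1$ pointwise, as established in the discussion following Lemma \ref{c_step4}. The plan is to exploit this via the triangle inequality applied to carefully chosen test sequences in $B_1$.

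Given $x\in B$ with $\lim x_k=h>0$, a fixed index $n$, and $\epsilon>0$, pick $N\geq n$ such that $\sup_{i>N}(1-x_i)\leq 1-h+\epsilon$; this is possible because $x_i\to h$. Define
\[
y^+_n=1,\qquad y^-_n=-1,\qquad y^\pm_i=x_i\ \text{for } i\leq N,\ i\neq n,\qquad y^\pm_i=1\ \text{for } i>N.
\]
Both $y^+,y^-$ converge to $1$, hence belong to $B_1$, and so $\tildeF(y^\pm)=y^\pm$. A direct sup-norm computation yields
\[
\|x-y^+\|=\max\bigl(1-x_n,\ \sup_{i>N}(1-x_i)\bigr),\qquad \|x-y^-\|=\max\bigl(1+x_n,\ \sup_{i>N}(1-x_i)\bigr).
\]
Applying $|\tildeF(x)_n-y^\pm_n|\leq \|x-y^\pm\|$ together with Lemma \ref{c_step3} handles all three items. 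For item 2 ($x_n\geq h$): one has $1-x_n\leq 1-h$, so $\|x-y^+\|\leq 1-h+\epsilon$, giving $\tildeF(x)_n\geq h-\epsilon$; letting $\epsilon\to 0$ gives $\tildeF(x)_n\geq h$, while Lemma \ref{c_step3} supplies the upper bound $\tildeF(x)_n\leq x_n$. Item 3 is symmetric via $y^-$: when $x_n\leq -h$, $1+x_n\leq 1-h$ forces $\tildeF(x)_n\leq -h$. For item 1 with $x_n\in(0,h)$, choose $\epsilon<h-x_n$ so that $1-x_n$ dominates in $\|x-y^+\|$; then $\tildeF(x)_n\geq x_n$, which together with Lemma \ref{c_step3} forces equality. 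The sub-case $x_n\in(-h,0)$ uses $y^-$ in the same way, and $x_n=0$ is immediate from Lemma \ref{c_step1}.

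The key design choice is the tail value of the test sequences: taking $y^\pm_i=1$ for $i>N$ puts $y^\pm$ into $B_1$ (so they are fixed points of $\tildeF$) and caps the tail contribution to $\|x-y^\pm\|$ at approximately $1-h$ rather than $1+h$, which is precisely the tightness required to produce the sharp thresholds $\pm h$. Any attempt to use a $B_{-1}$-test sequence for $h>0$ would blow the tail distance up to $1+h$ and lose the critical bound; conversely, the mirror construction for $h<0$ uses sequences in $B_{-1}$ with tail $-1$, matching the sign of the limit.
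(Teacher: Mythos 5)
Your proof is correct and follows essentially the same route as the paper: for $h>0$ your test sequences $y^{\pm}$ (agreeing with $x$ on an initial segment, equal to $\pm 1$ at index $n$, with constant tail $1$) are exactly the paper's sequence $y_k=\sgn(x_n)$ at $k=n$, $y_k=x_k$ for $k<N$, $y_k=\sgn(h)$ for $k\geq N$, and both arguments combine the fixed-point property from Lemma \ref{c_step4} with non-expansiveness and Lemma \ref{c_step3} to pin down $\tildeF(x)_n$. The only cosmetic differences are that the paper treats $h\neq 0$ uniformly via $\sgn(h)$ rather than splitting into mirror cases, and for item 1 it fixes $\varepsilon=|h|-|x_n|$ at the outset instead of passing to a limit.
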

\begin{proof}
    For the case $h=0$, the three items follow from Lemma  \ref{c_step3} and the item 1) of Lemma \ref{c_step1}, so it remains to consider the case $h\ne 0$.
    
    Let us start with proving the first item. For the case $x_n=0$, the claim follows from Lemma \ref{c_step1}, so it remains to consider the case $x_n\ne 0$. Let $\varepsilon = |h|-|x_n|$. Note that $\varepsilon>0$. Since the sequence $x$ converges to $h$, then there exists an index $N\in\N$ such that $|x_k-h|<\varepsilon$ for each $k\geq N$. Note that $n<N$. Define a sequence $y\in B$ as
    
    \[
        y_k=
        \begin{cases}
            \sgn(x_n),   & k=n,\\
            x_k, & k<N,\ k\ne n,\\
            \sgn(h), & k\geq N.
        \end{cases}
    \]
    
    Note that the sequence $y$ satisfies the conditions of Lemma \ref{c_step4}, hence we have $\tildeF(y)=y$. Compare sequences $x$ and $y$. For $k=n$ we have $|x_k-y_k|=1-|x_n|$. For $k<N$, $k\ne n$ we have $|x_k-y_k|=0$. For $k\geq N$ we have $|x_k-y_k|<1-|x_n|$. It follows that the distance between $x$ and $y$ is equal to $1-|x_n|$. Therefore, the distance between $\tildeF(x)$ and $y$ is at most $1-|x_n|$ (recall that $\tildeF(y)=y$). If $x_n<0$, then the latter fact implies $\tildeF(x)_n\leq x_n$, while Lemma \ref{c_step3} implies $\tildeF(x)_n\geq x_n$. If $x_n>0$, then the latter fact implies $\tildeF(x)_n\geq x_n$, while Lemma \ref{c_step3} implies $\tildeF(x)_n\leq x_n$. In either case we have $\tildeF(x)_n=x_n$ as wanted.
    
    Now, let us consider the second item. By Lemma \ref{c_step3} we know $\tildeF(x)_n\leq x_n$, so it remains to show $\tildeF(x)_n\geq |h|$. Let $\varepsilon$ be an arbitrary positive number. Since the sequence $x$ converges to $h$, then there exists an index $N\in\N$ such that $|x_k-h|<\varepsilon$ for each $k\geq N$. If it happens that $N\leq n$, then choose $N$ to be any index greater than $n$. Define a sequence $y\in B$ as before. Note that the sequence $y$ satisfies the conditions of Lemma \ref{c_step4}, hence we have $\tildeF(y)=y$. Compare sequences $x$ and $y$. For $k=n$ we have $|x_k-y_k|=1-|x_n|\leq 1-|h|$. For $k<N$, $k\ne n$ we have $|x_k-y_k|=0$. For $k\geq N$ we have $|x_k-y_k|<1-|h|+\varepsilon$. It follows that the distance between $x$ and $y$ is at most $1-|h|+\varepsilon$. Therefore, the distance between $\tildeF(x)$ and $y$ is also at most $1-|h|+\varepsilon$ (recall that $\tildeF(y)=y$). This yields $\tildeF(x)_n\geq |h|-\varepsilon$. Since $\varepsilon$ was arbitrary, then it follows that $\tildeF(x)_n\geq |h|$. This concludes the proof of the second item. The proof of the third item is analogous. 
\end{proof}

We can make some conclusions from the properties obtained in the last lemma. First, we see that $\tildeF$ preserves the limit -- for every $x\in B$ we have $\lim \tildeF(x)_k=\lim x_k$. Moreover, we see that the inverse function has the following property.

\begin{lemma}\label{c_step5a}
    Let $y\in B$, $h=\lim y_k$ and $n\in\N$. If $|y_n|<|h|$, then $\tildeF^{-1}(y)_n=y_n$.
\end{lemma}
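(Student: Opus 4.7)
I would set $x=\tildeF^{-1}(y)$, so that $\tildeF(x)=y$, and try to show directly that $x_n = y_n$. The whole argument hinges on applying Lemma \ref{c_step5} to $x$, which requires first knowing $\lim x_k$.

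The preliminary step is to use the fact — noted just after Lemma \ref{c_step5} — that $\tildeF$ preserves the limit of every sequence in $B$. Applying this to $x$ gives $\lim x_k = \lim \tildeF(x)_k = \lim y_k = h$. The hypothesis $|y_n|<|h|$ forces $|h|>0$ (otherwise the hypothesis is vacuous), and the three cases of Lemma \ref{c_step5} — namely $|x_n|<|h|$, $x_n\geq|h|$, and $x_n\leq -|h|$ — then partition all admissible values of $x_n$.

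The main step is to eliminate the two outer cases by contradiction. If $x_n\geq|h|$, item 2) of Lemma \ref{c_step5} would give $y_n = \tildeF(x)_n\in[|h|,x_n]$, so $y_n\geq|h|$, contradicting $|y_n|<|h|$. Symmetrically, if $x_n\leq -|h|$, item 3) would yield $y_n\leq -|h|$, again contradicting $|y_n|<|h|$. Hence $|x_n|<|h|$, and item 1) of Lemma \ref{c_step5} gives $\tildeF(x)_n=x_n$, which translates to $\tildeF^{-1}(y)_n = y_n$, as required.

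I do not foresee a real obstacle: the lemma is essentially a direct transfer of item 1) of Lemma \ref{c_step5} from $\tildeF$ to $\tildeF^{-1}$ via the limit-preservation observation, and the sign-case bookkeeping is routine.
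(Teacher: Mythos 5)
Your proposal is correct and follows exactly the paper's own argument: denote $x=\tildeF^{-1}(y)$, use the limit-preservation of $\tildeF$ to get $\lim x_k=h$, rule out the cases $x_n\geq|h|$ and $x_n\leq-|h|$ via items 2) and 3) of Lemma \ref{c_step5}, and conclude from item 1). No differences worth noting.
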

\begin{proof}
    Denote $\tildeF^{-1}(y)$ by $x$. As mentioned above, $\tildeF$ preserves the limit. Therefore, we have $\lim x_k=\lim y_k=h$. We can have three cases: $|x_n|<|h|$, $x_n\geq |h|$ and $x_n\leq -|h|$. If $x_n\geq |h|$, then Lemma \ref{c_step5} implies that $y_n\geq |h|$, which contradicts our assumption. If $x_n\leq -|h|$, then Lemma \ref{c_step5} implies that $y_n\leq -|h|$, which contradicts our assumption. Therefore, we are left with the case $|x_n|<|h|$, so Lemma \ref{c_step5} implies that $y_n=x_n$.
\end{proof}

We are almost done. Recall that in the case of $c_0$ the last step was to show that the set $B^S_0$, where $S$ is a finite subset of $\N$, is mapped bijectively onto itself. To finish the proof at hand, it will suffice to show the same for the set $B^S_h$, where $S$ is a finite subset of $\N$ and $h\in[-1,1]$. Lemma \ref{c_step5} implies the inclusion $\tildeF(B^S_h)\subset B^S_h$, so it remains to show that the same is true for the inverse function. This is exactly what the next lemma asserts. It will be more convenient to limit ourselves to the case $h\in(-1,1)\setminus\{0\}$. Fortunately, this will be sufficient.

\begin{lemma}\label{c_step6}
    Let $h\in(-1,1)\setminus\{0\}$ and let $S$ be a finite subset of $\N$. Then $\tildeF^{-1}(B^S_h)\subset B^S_h$.
\end{lemma}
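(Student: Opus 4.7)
The plan is, for $y \in B^S_h$, to set $x = \tildeF^{-1}(y)$ and check the two conditions defining $B^S_h$ separately. The membership $x \in B_h$, i.e.\ $\lim x_k = h$, will be immediate: limit preservation by $\tildeF$ (observed right after Lemma \ref{c_step5}) transfers to $\tildeF^{-1}$ because $\tildeF$ is a bijection. So the real content of the lemma is the coordinate condition $x_n = h$ for every $n \in \N \setminus S$.

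To establish this, I will fix such an $n$ (so $y_n = h$) and treat the case $h \in (0,1)$; the case $h \in (-1,0)$ is completely symmetric under sign swap. Lemma \ref{c_step3a} applied at index $n$ already hands over one of the two needed inequalities for free: since $y_n = h > 0$, one has $x_n \in [h,1]$, so $x_n \geq h$. The whole remaining task is to produce a matching upper bound $x_n \leq h$.

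For the upper bound, my plan is to perturb $y$ at coordinate $n$ alone, defining $y' \in B$ by $y'_n = -1$ and $y'_k = y_k$ otherwise. A single-coordinate change preserves both the supremum bound and the convergence to $h$, so $y' \in B$ with no further work. Setting $x' = \tildeF^{-1}(y')$, Lemma \ref{c_step3a} applied to $y'_n = -1$ collapses $x'_n$ to the single value $-1$. Since $d(y,y') = h + 1$, non-expansiveness of $\tildeF^{-1}$ yields $x_n + 1 = |x_n - x'_n| \leq 1 + h$, whence $x_n \leq h$. Combined with the lower bound this forces $x_n = h$, as desired.

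The one nontrivial design choice is the perturbation $y'$: the argument works because pushing $y'_n$ to the extreme value $-1$ is exactly what makes Lemma \ref{c_step3a} pin $x'_n$ down to a single number, which in turn makes the non-expansiveness inequality tight enough to meet the lower bound already coming from Lemma \ref{c_step3a} on $y$. No induction on $|S|$, no density argument, and none of the heavier machinery (Lemma \ref{c_step4} or Lemma \ref{c_step5}) is needed, which fits the role of this lemma as the final piece before the main theorem. The boundary exclusion $|h| = 1$ in the statement is also natural from this viewpoint: if $|h| = 1$ one cannot push $y'_n$ farther than $y_n$ in the opposite sign, so the same trick degenerates --- but that boundary case is exactly what Lemma \ref{c_step4} already handled.
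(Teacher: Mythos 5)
Your lower bound $x_n \geq h$ via Lemma \ref{c_step3a} and the limit-preservation argument for $x \in B_h$ are both fine, but the upper bound collapses at the decisive step. You invoke ``non-expansiveness of $\tildeF^{-1}$'' to get $|x_n - x'_n| \leq d(y,y') = 1+h$. But $\tildeF^{-1}$ is not non-expansive: only $\tildeF$ is. The inverse of a non-expansive bijection is \emph{expansive}, i.e.
\[
d\bigl(\tildeF^{-1}(y),\tildeF^{-1}(y')\bigr) \;\geq\; d(y,y'),
\]
since applying the non-expansive $\tildeF$ to the two preimages recovers $y$ and $y'$. So your inequality runs in the wrong direction: you obtain $\|x - x'\| \geq 1+h$, which is consistent with $x_n$ being anything in $[h,1]$ and gives no upper bound at all. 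Note also that if $\tildeF^{-1}$ \emph{were} non-expansive, then $\tildeF$ would immediately be an isometry and the entire paper would be a two-line argument --- the asymmetry between $\tildeF$ and $\tildeF^{-1}$ is the whole difficulty of the plasticity problem. The fact that your proof needs neither Lemma \ref{c_step4} nor Lemma \ref{c_step5} should have been a warning sign.

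The paper's proof respects this asymmetry: to bound a distance between \emph{preimages} from above, one cannot use the map itself; instead one pins down the preimage coordinate by coordinate. Concretely, assuming $\tildeF^{-1}(y)_n > h$ for contradiction, the paper builds an auxiliary point $z$ (with $z_n=1$ and limit $h + (1-h)/2$) whose preimage is shown to equal $z$ itself via Lemmas \ref{c_step3a} and \ref{c_step5a}; then the coordinate-wise interval constraints on $\tildeF^{-1}(y)$ force $\|\tildeF^{-1}(y) - \tildeF^{-1}(z)\| < 1-h$, while $\|y - z\| = 1-h$, contradicting non-expansiveness of $\tildeF$ in the legitimate direction. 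Your perturbation idea ($y'_n = -1$, so that Lemma \ref{c_step3a} pins $x'_n = -1$) is in the right spirit --- it is the same kind of ``rigid anchor'' the paper uses --- but to make it work you must control \emph{all} coordinates of the preimages well enough to upper-bound their distance, and then derive the contradiction from a distance between the images, not the other way around.
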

\begin{proof}
    Let us consider the case $h>0$. Let $y$ be an arbitrary element of the set $B^S_h$. Our goal is to show that $\tildeF^{-1}(y)\in B^S_h$. To prove this, we need to show that $\tildeF^{-1}(y)_n=h$ for every $n\in\N\setminus S$. Let $n$ be an arbitrary element of $n\in\N\setminus S$. Since $y_n=h$ and $h>0$, then Lemma \ref{c_step3a} says $\tildeF^{-1}(y)_n\geq h$, so we only need to show that the reverse inequality is also true. For the sake of contradiction, suppose that $\tildeF^{-1}(y)_n > h$. Denote $1-h$ by $d$. Define a sequence $z\in B$ as 
    
    \[
        z_k=
        \begin{cases}
            1,              & k=n,\\
            y_k,            & |y_k|<h,\\
            h+d/2(1-1/2^k),    & y_k\geq h,\ k\ne n,\\
            -h-d/4, & y_k\leq -h.
        \end{cases}
    \]
    
    It is straightforward to check that $z_k\in[-1,1]$ for every $k\in\N$. We see that for every $k\not\in S\cup\{n\}$ we have the third case. Since the set $S\cup\{n\}$ is finite and $h+d/2(1-1/2^k)\to h+d/2$, then we also have $z_k\to h+d/2$. Therefore, we see that $z$ is indeed an element of $B$.
    
    Denote $\tildeF^{-1}(y)$ and $\tildeF^{-1}(z)$ by $y'$ and $z'$. To obtain a contradiction, let us show that the distance between $y'$ and $z'$ is smaller than $d$. Since $z_n=1$, then Lemma \ref{c_step3a} says $z'_n=1$. Consider an index $k\ne n$. Recall that $\lim z_k=h+d/2$. Since we have $|z_k|< |h+d/2|$, then Lemma \ref{c_step5a} implies $z'_k=z_k$. It follows that $z'=z$. Therefore, we need to show that the distance between $y'$ and $z$ is smaller than $d$. According to our assumption, we have $y'_n>h$, hence $|y'_n-z_n|<d$. To show that the distance between $y'$ and $z$ is smaller than $d$, it will suffice to show that $|y'_k-z_k|\leq 3d/4$ for every $k\ne n$. Consider the case $|y_k|<h$. Lemma \ref{c_step5a} implies $y'_k=y_k$ and the definition of $z$ implies $z_k=y_k$, so $y'_k=z_k$ and $|y'_k-z_k|=0$. Let us consider the cases $y_k\geq h$ and $y_k\leq -h$. If $y_k\geq h$, then Lemma \ref{c_step3a} implies that $y'_k\in[h,1]$, and if $y_k\leq -h$, then Lemma \ref{c_step3a} implies that $y'_k\in[-1,-h]$. In either case, the greatest possible distance between $y'_k$ and $z_k$ is $3d/4$. This shows that the distance between $y'$ and $z$ is smaller than $d$. Since the distance between $y'$ and $z$ is smaller than $d$, then the distance between $y$ and $z$ should be also smaller than $d$, but we have $|y_n-z_n|=d$, which is a contradiction. The case $h<0$ is analogous.
\end{proof}

Now, we can finish the proof. The remaining part is very similar to the way we finished the proof for $c_0$.

\begin{theorem}\label{c_main_result}
    $F$ is an isometry.
\end{theorem}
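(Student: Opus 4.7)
The strategy is to mimic the endgame of Theorem \ref{c0_main_result}, with the slices $B_h^S$ for $h\in(-1,1)\setminus\{0\}$ and finite $S\subset\N$ playing the role of $B_0^S$. The plan is: show that $\tildeF$ restricts to a non-expansive bijection of $B_h^S$ onto itself; use compactness/plasticity of a finite-dimensional ball plus Theorem \ref{Mankiewicz} to identify this restriction with the identity; and then propagate ``$\tildeF=\mathrm{id}$'' from a dense subunion to all of $B$ by continuity of $\tildeF$.

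Lemma \ref{c_step6} already gives $\tildeF^{-1}(B_h^S)\subset B_h^S$. For the forward inclusion, take $x\in B_h^S$: for each $n\notin S$ one has $x_n=h=\sgn(h)|h|$, so the equality case of item 2) (if $h>0$) or item 3) (if $h<0$) of Lemma \ref{c_step5} pins $\tildeF(x)_n=h$, and finiteness of $S$ then yields $\lim\tildeF(x)_k=h$. Now $B_h^S$ is affine-isometric to the closed unit ball of $\ell_\infty^{|S|}$ via $x\mapsto(x_n)_{n\in S}$ (the remaining coordinates are frozen at $h$), hence compact and plastic; so $\tildeF|_{B_h^S}$ is an isometry, and by Theorem \ref{Mankiewicz} it is the restriction of an affine isometric bijection of the affine hull $\mathbf{h}_S+V$ onto itself, where $V=\{v\in c:v_n=0\text{ for }n\notin S\}$ and $\mathbf{h}_S$ denotes the sequence that equals $h$ outside $S$ and $0$ on $S$.

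The point $\mathbf{h}_S$ is fixed by $\tildeF$: its $S$-coordinates are $0$, hence preserved by item 1) of Lemma \ref{c_step1}; its coordinates outside $S$ equal $h$, pinned by the equality case of Lemma \ref{c_step5}. Therefore the affine extension is in fact a linear isometry $T\colon V\to V$ of $V\cong\ell_\infty^{|S|}$. Every such $T$ is a signed permutation; applying Lemma \ref{c_step1} to each $\mathbf{h}_S+e^n$ with $n\in S$ at every coordinate $m\in S$ (the value there is zero for $m\ne n$ and positive for $m=n$) forces $T(e^n)=e^n$, so $T=\mathrm{id}_V$ and $\tildeF$ is the identity on $B_h^S$. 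Varying $S$ and using that $B_h^*=\bigcup_S B_h^S$ is dense in $B_h$, continuity of $\tildeF$ yields $\tildeF=\mathrm{id}$ on $B_h$ for every $h\in(-1,1)\setminus\{0\}$. Combined with $\tildeF=\mathrm{id}$ on $B_1\cup B_{-1}$ (already established right after Lemma \ref{c_step4}), and with density of $\bigcup_{h\in[-1,1]\setminus\{0\}}B_h$ in $B$ (any $x\in B_0$ is approximated by the sequences $y^k$ defined by $y^k_n=x_n$ for $n\le k$ and $y^k_n=1/k$ for $n>k$), continuity propagates $\tildeF=\mathrm{id}$ to all of $B$, so $F=\calA$ is an isometry.

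I expect the only real subtlety to be extracting the identity from the affine isometry of $B_h^S$ produced by Mankiewicz; the key point is that Lemma \ref{c_step1} is a \emph{coordinatewise} sign/zero condition, which, applied to each standard basis vector within $V$, rules out both nontrivial permutations of the indices in $S$ and any sign flips on them. Everything else is a density-and-continuity cleanup directly mirroring the $c_0$ argument, plus the forward invariance $\tildeF(B_h^S)\subset B_h^S$ that follows from the boundary cases of Lemma \ref{c_step5}.
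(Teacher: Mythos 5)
Your proposal is correct and follows essentially the same route as the paper: forward invariance of $B_h^S$ from Lemma \ref{c_step5}, backward invariance from Lemma \ref{c_step6}, plasticity of the finite-dimensional ball plus Theorem \ref{Mankiewicz} to get the identity on each $B_h^S$, and then density and continuity to propagate to $B_h$, to $B\setminus B_0$, and finally to all of $B$. Your explicit argument that the affine isometry produced by Mankiewicz must be the identity (fixing $\mathbf{h}_S$, reducing to a signed permutation, and pinning each $e^n$ via Lemma \ref{c_step1}) is a welcome fleshing-out of a step the paper leaves as ``combining this with some previously acquired information.''
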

\begin{proof}
    Let $h\in(-1,1)\setminus\{0\}$ and let $S$ be a finite subset of $\N$. If $x\in B^S_h$, then $\lim x_k=h$ and Lemma \ref{c_step5} implies that $\tildeF(x)_n=h$ whenever $x_n=h$. This yields the inclusion $\tildeF(B^S_h)\subset B^S_h$. Applying Lemma \ref{c_step6}, we obtain the inclusion $\tildeF^{-1}(B^S_h)\subset B^S_h$. Combining these two together, we see that the set $B^S_h$ is mapped bijectively onto itself. It follows that the restriction of $\tildeF$ to $B^S_h$ is a non-expansive bijection from the unit ball of a finite-dimensional space onto itself (the set $B^S_h$, as a metric space, can be identified with $B^S_0$). Since the unit ball of a finite-dimensional space is plastic, then it follows that the restriction of $\tildeF$ to $B^S_h$ is an isometry. Theorem \ref{Mankiewicz} says that the latter is actually a restriction of an isometric automorphism of the underlying finite-dimensional space. Combining this with some previously acquired information, we can conclude that the restriction of $\tildeF$ to $B^S_h$ is an identity map.
    
    Since $\tildeF$ is an identity map on $B^S_h$ for every finite subset $S$, then the restriction of $\tildeF$ to $B^*_h$ is also an identity map, because the latter is the union of all the subsets $B^S_h$, where $S$ is a finite subset of $\N$. Since $\tildeF$ is an identity map on $B^*_h$, $\tildeF$ is continuous and $B^*_h$ is dense in $B_h$, then it follows that the restriction of $\tildeF$ to $B_h$ is also an identity map.
    
    We have seen that $\tildeF$ is an identity map on $B_h$ for every $h\in(-1,1)\setminus\{0\}$. Previously, we have also seen that $\tildeF$ is an identity map on $B_{-1}$ and $B_1$. It follows that the restriction of $\tildeF$ to $B\setminus B_0$ is an identity map. Since $\tildeF$ is continuous and $B\setminus B_0$ is dense in $B$, then it follows that $\tildeF$ is an identity map. This means that $F$ is a restriction of an isometric automorphism of $c$ defined by $\calA(x)_{\sigma_n}=\alpha_n x_n$. In particular, $F$ is an isometry.
\end{proof}

\section*{Acknowledgements}
The original results presented in this paper are part of the author's bachelor's thesis \textquote{Plasticity of the unit ball of a Banach space}, defended at the University of Tartu on 10 June 2021 and supervised by Rainis Haller (University of Tartu) and Olesia Zavarzina (V.~N.~Karazin Kharkiv National University). The author wishes to thank Vladimir Kadets for moral support and Aleksei Lissitsin for pointing out some typos.
\newpage
\printbibliography[title={References}]

@article{NPW2006,
    author =    {S.~A.~Naimpally and Z.~Piotrowski and E.~J.~Wingler},
    title =     {Plasticity in metric spaces},
    journal =   {J. Math. Anal. Appl},
    volume =    {313},
    pages =     {38-48},
    year =      {2006}
}

@article{CKOW2016,
    author =    {B.~Cascales and V.~Kadets and J.~Orihuela and E.~J.~Wingler},
    title =     {Plasticity of the unit ball of a strictly convex Banach space},
    journal =   {Revista de la Real Academia de Ciencias Exactas, F\'{i}sicas y Naturales. Serie A. Matem\'{a}ticas},
    volume =    {110},
    number =    {2},
    pages =     {723-727},
    year =      {2016}
}

@article{AKZ2018,
    author =    {C.~Angosto and V.~Kadets and O.~Zavarzina},
    title =     {Non-expansive bijections, uniformities and polyhedral faces},
    journal =   {Journal of Mathematical Analysis and Applications},
    volume =    {471},
    number =    {1},
    pages =     {38-52},
    year =      {2019}
    %issn = {0022-247X},
    %doi = {https://doi.org/10.1016/j.jmaa.2018.10.058},
    %url = {https://www.sciencedirect.com/science/article/pii/S0022247X18308874}
}

@article{KZ2018,
    author =    {V. Kadets and O. Zavarzina},
    title =     {Nonexpansive bijections to the unit ball of the $\ell_1$-sum of strictly convex Banach spaces},
    journal =   {Bulletin of the Australian Mathematical Society},
    volume =    {97},
    pages =     {285-292},
    year =      {2018}
}

@article{KZ2016,
    author =    {V. Kadets and O. Zavarzina},
    title =     {Plasticity of the unit ball of $\ell_1$},
    journal =   {Visnyk of V. N. Karazin Kharkiv National University. Ser. Mathematics, Applied Mathematics and Mechanics},
    volume =    {83},
    pages =     {4--9},
    year =      {2016}
    %doi = {https://doi.org/10.26565/2221-5646-2016-83-01}
}

@article{Mankiewicz1972,
    author =    {P.~Mankiewicz},
    title =     {On extension of isometries in normed linear spaces},
    journal =   {Bull. Acad. Polon. Sci., S\'er. Sci. Math. Astronom. Phys.},
    volume =    {20},
    pages =     {367--371},
    year =      {1972}
}

\end{document}